\documentclass[12pt]{article}
%%%%%%%%%%%%%%%%%%%%%%%%%%%%%%%%%%%%%%%%%%%%%%%%%%%%%%%%%%%%%%%%%%%%%%%%%%%%%%%%%%%%%%%%%%%%%%%%%%%%%%%%%%%%%%%%%%%%%%%%%%%%
%\pdfsuppresswarningpagegroup=1
\usepackage[mathscr]{eucal}
\usepackage{amsthm,amsmath,amssymb,amscd}
\usepackage{graphicx}
\graphicspath{{figures/}}
\usepackage{latexsym,enumerate}
\usepackage{subfig}
\usepackage{color}
\usepackage{pgf,tikz}
\usetikzlibrary{angles}
\usetikzlibrary{arrows}
%\usetikzlibrary{arrows.meta}%{shapes.arrows,chains,positioning}
\usepackage[outline]{contour}
\usepackage{caption}
%\usepackage{subcaption}
%%%%%%%%%%%%%%%%%%%%%%%%%%%%%%%%%%%%%%%%%%%%%
\newtheorem{theorem}{Theorem}

\newtheorem{proposition}{Proposition}

\theoremstyle{definition}
\newtheorem{definition}{Definition}

\newtheorem*{remark}{Remark}

%\newtheorem*{example}{Example}
%--------------------------------------------
%\usepackage{color,hyperref}
%\definecolor{dark-blue}{rgb}{0.15,0.15,0.4}
%\definecolor{dark-red}{rgb}{0.4,0.15,0.15}
%\definecolor{medium-red}{rgb}{0.6,0,0}
%\definecolor{medium-blue}{rgb}{0,0,0.6}
%\hypersetup{
%    colorlinks=true,
%    linktoc=all,
%    citecolor=medium-red,
%    filecolor=medium-blue,
%    linkcolor=medium-blue,
%    urlcolor=medium-blue,
%}
%-----------------------------------------
%\newcommand{\blue}[1]{\textcolor{blue}{#1}}
%\newcommand{\red}[1]{\textcolor{red}{#1}}

%\title{Four Body Stuff}
%\author{Manuele Santoprete}

\begin{document}
\title{Some Polynomial Conditions for Cyclic Quadrilaterals, Tilted Kites and Other Quadrilaterals}
\author{Manuele Santoprete\thanks{ Department of Mathematics, Wilfrid Laurier
University E-mail: msantopr@wlu.ca. ORCID: 0000-0003-0501-7517}} 
 \maketitle
 Keywords: quadrilaterals, cyclic quadrilaterals, tilted kites, Gr\"obner basis, automated theorem proving \\
 MSC2020: 51M04,  68W30%,51M05, 68V15  
\begin{abstract}
    In this paper, we investigate some polynomial conditions that arise from Euclidean geometry. First we study polynomials related to quadrilaterals with supplementary angles, this includes convex cyclic quadrilaterals, as well as certain concave quadrilaterals. Then we consider polynomials associated with quadrilaterals with some equal angles, which include convex and concave tilted kites. Some of the results are proved using Groebner bases computations.  
  \end{abstract}

\renewcommand{\thefootnote}{\alph{footnote})}
%\tableofcontents

%\maketitle

\section{Introduction}
This paper studies some polynomial equations that arise from Euclidean Geometry.
The variables appearing in these polynomials are the mutual distances between four points, and the equations in question  express some properties of quadrilaterals. 
%It turns out that these identities express some properties of quadrilaterals, and the  variables are the mutual distances between four points.
%The variables appearing in these polynomials are given by the mutual distances between four points, and the polynomials in question are strictly related to some property of quadrilaterals.
In \cite{10.1007/978-3-642-21898-9_34} Pech studied several polynomials, three of  them, which we denote $ P $ , $ S $ and $ K $ (see Section \ref{sec:cyclic} for their expressions), are necessary conditions for a quadrilateral to be cyclic. The condition $ P=  0$  is the equation  that appears in the celebrated Ptolemy's Theorem, while $ S = 0 $ and $ K = 0  $ are a cubic and a quartic equations, respectively. The condition $ K = 0 $ follows from the cosine law, while  $ S = 0 $ is less well known. All three conditions are closely connected, in fact, Pech proved that $ P = 0 $ is equivalent to $( K = 0 \wedge S = 0)$.   \cite{10.1007/978-3-642-21898-9_34}. In this paper we give a simpler proof of this fact that doesn't use algebraic geometry. 

The  polynomials studied by Pech  \cite{10.1007/978-3-642-21898-9_34}  have been useful in studying cyclic central configurations of four bodies in Celestial Mechanics \cite{cors2012four,santoprete2021cocircular}. Similar polynomials, related to another type of quadrilaterals, namely trapezoids,  have also been found to be useful in studying central configurations of four bodies 
\cite{santoprete2019planarity,santoprete2021trapezoidal}.  In light of this, finding additional polynomial conditions characterizing various configurations of four points is not only interesting from a geometry standpoint, but also from the   perspective of potential applications to Celestial Mechanics.

In this paper we study the polynomial $ R $ (see Section \ref{sec:cyclic}), which also appeared in Pech's paper  \cite{10.1007/978-3-642-21898-9_34}, but wasn't analyzed in detail. We show that the condition $ R = 0 $ selects certain configurations (either convex or concave) that possess supplementary angles, but are not cyclic.  

We also study  other polynomials that are important for quadrilateral where some of the angles are equal. Recall that a kite is a quadrilateral with two pairs of equal-length adjacent sides.
Kites can be either concave or convex, and they have equal opposite angles. A way to generalize kites is to consider quadrilaterals with equal opposite angles. These quadrilaterals are called tilted kites \cite{graumann2005investigating,josefsson2018properties} or angle quadrilaterals \cite{de2009some}, and they can be convex or concave. The name tilted kite seems more descriptive since such a quadrilateral can be obtained from a kite by skewing  or tilting one of the angles.
The new polynomials we study in this paper are useful in characterizing concave and convex titled kites. 

The paper is organized as follows. In Section 2 we prove several useful results from Euclidean geometry and algebraic geometry. In Section 3 we study some polynomials related to cyclic quadrilaterals and other geometrical figures with supplementary angles. We start with analyzing conditions that were already studied by Petch \cite{10.1007/978-3-642-21898-9_34}. We give a more precise statement of the Converse of Ptolemy's theorem and a simpler proof of one of the results by Pech. We conclude with an in detail analysis of the condition $ R = 0 $ (see equation \eqref{thm:R=0}) and its geometrical meaning. 
In Section 4 we study  some new polynomial  conditions related to tilted kites and other geometrical figures with two equal angles. 
In Section 5, we conclude by introducing other sets of similar polynomials, and giving some ideas on how to find additional interesting polynomials associated with quadrilaterals. 
\section{Preliminaries}

Consider four points in $ \mathbb{R}  ^3 $ given by $ A $, $ B  $, $ C  $, and $ D =  $ (here we are using a right-handed coordinate system), and let $ a = |AB| $,  $ b = |BC| $, $ c = |CD|$, $ d = |DA| $, $ e = |AC| $, $ f = |BD| $, be the Euclidean distances between pair of points. 

The following theorem gives a conditions for four points to lie on a plane. 
\begin{theorem} 
If the  four points  $ A, B, C , D \in \mathbb{R}  ^3 $ with distances $ a , b ,c ,d, e $  and $ f $,   belong to a $ 2 D $  affine subspace of  $ \mathbb{R}  ^3 $, then 
\[   CM = \begin{vmatrix}
            0 & 1 & 1 & 1 & 1 \\
            1 & 0 & a^2 & e^2 & d^2  \\
            1 & a^2 & 0 & b^2 &f^2 \\
            1 & e^2 & b^2 & 0 & c^2\\
            1 & d^2 & f^2 & c^2 & 0
        \end{vmatrix}= 0.
\]
\end{theorem}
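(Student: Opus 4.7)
The plan is to recognize the $5 \times 5$ determinant $CM$ as a positive constant multiple of the squared volume of the tetrahedron with vertices $A, B, C, D$. Once this identity is in hand, coplanarity of the four points forces the volume to vanish, and with it $CM$.

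First I would place $A$ at the origin and form the $3 \times 3$ matrix $M$ whose rows are the coordinate vectors of $B - A$, $C - A$, $D - A$. If $V$ denotes the volume of the tetrahedron $ABCD$, then $|\det M| = 6V$, and hence $\det(MM^{T}) = 36\,V^{2}$. The Gram matrix $G = MM^{T}$ has entries $(X - A)\cdot(Y - A)$, and by the polarization identity
\[
(X - A)\cdot(Y - A) \;=\; \tfrac{1}{2}\bigl(|AX|^{2} + |AY|^{2} - |XY|^{2}\bigr),
\]
each such entry is an explicit polynomial in the six squared distances $a^{2}, b^{2}, c^{2}, d^{2}, e^{2}, f^{2}$.

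Next, I would relate $\det G$ to $CM$ by elementary row and column operations on the $5 \times 5$ matrix: using the $1$'s in the first row and column, together with the entries in the row and column indexed by $A$, one can cancel all the $|AX|^{2}$ contributions from the remaining entries, reducing $CM$ to a constant multiple of the $3 \times 3$ determinant of a matrix proportional to $G$. Tracking the constants yields the classical Cayley--Menger identity $CM = 288\,V^{2}$, so the coplanarity hypothesis $V = 0$ immediately forces $CM = 0$.

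The main obstacle is precisely this bookkeeping: tracking the signs and the factors of $2$ introduced by the polarization identity and by the cofactor expansion is routine but easy to slip on. A safer alternative is to quote the standard Cayley--Menger formula
\[
(-1)^{n+1}\,2^{n}\,(n!)^{2}\,V_{n}^{2} \;=\; CM_{n}
\]
in the case $n = 3$ directly, which sidesteps the computation at the cost of appealing to a known identity.
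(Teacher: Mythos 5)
Your proposal is correct and matches the paper's own justification: the paper gives no proof beyond the remark that $CM = 288\,V^2$ in general, so coplanarity forces $V=0$ and hence $CM=0$. Your Gram-matrix/polarization sketch simply fills in the derivation of that identity, which the paper treats as known.
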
 
The determinant  $ CM$ is the Cayley-Menger determinant of the four points $ A,B,C $ and $ D $, and, in general  $ CM = 288 V ^2 $, where  $ V $ is the volume of the 3-simplex of edge lengths $ a,b,c,d,e,f $. 
The condition $ CM = 0 $ is known in Euclidean geometry as Euler's four point relation. 

Another useful result is Euler's quadrilateral theorem:
\begin{theorem}[Euler's Quadrilateral Theorem] 
Consider a convex quadrilateral with consecutive sides $ a,b,c,d $ and diagonals $ e, f $. Then 
\[ a ^2 + b ^2 + c ^2 + d ^2 - e ^2 - f ^2 = 4 v ^2 \]
where $ v $ is the distance between the midpoints of the diagonals $ e $ and $ f $. 
\end{theorem}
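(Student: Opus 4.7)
The plan is to reduce Euler's quadrilateral theorem to three applications of Apollonius's median length formula, which states that for any triangle $XYZ$ with $P$ the midpoint of $YZ$, one has $|XP|^2 = \tfrac12|XY|^2 + \tfrac12|XZ|^2 - \tfrac14|YZ|^2$. Let $M$ be the midpoint of the diagonal $AC$ and $N$ the midpoint of $BD$, so that $v = |MN|$.

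First I would apply the median formula to triangle $ABC$ with median $BM$ to obtain $|BM|^2 = \tfrac12 a^2 + \tfrac12 b^2 - \tfrac14 e^2$, and to triangle $ACD$ with median $DM$ to obtain $|DM|^2 = \tfrac12 c^2 + \tfrac12 d^2 - \tfrac14 e^2$. Then I would apply the formula once more, this time to triangle $BMD$ with median $MN$ (which is legitimate because $N$ is the midpoint of $BD$), giving $|MN|^2 = \tfrac12|BM|^2 + \tfrac12|DM|^2 - \tfrac14 f^2$. Substituting the first two identities into the third collapses the $e^2$ and $f^2$ contributions cleanly and yields $4|MN|^2 = a^2 + b^2 + c^2 + d^2 - e^2 - f^2$.

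An entirely equivalent route, which I would mention as a sanity check, is to use position vectors: write $\vec{MN} = \tfrac12\bigl((\vec B - \vec A) + (\vec D - \vec C)\bigr)$, expand $|\vec{MN}|^2$, and compare with the expansions of $|\vec B - \vec A|^2$, $|\vec D - \vec C|^2$, $|\vec C - \vec A|^2$, $|\vec D - \vec B|^2$; the cross terms cancel in exactly the right pattern. This second approach also makes clear that convexity is not actually needed for the identity itself — the formula holds for any four points in the plane (indeed in any Euclidean space), so the hypothesis of convexity in the statement is only a matter of context.

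There is no real obstacle: the only mild subtlety is bookkeeping to ensure that the coefficients of $e^2$, $f^2$, $|BM|^2$ and $|DM|^2$ combine with the correct signs after substitution. The argument is short and purely computational once Apollonius's theorem is in hand.
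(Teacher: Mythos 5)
Your proof is correct, and both routes you sketch check out: the three applications of Apollonius's median formula combine as claimed to give $4|MN|^2 = a^2+b^2+c^2+d^2-e^2-f^2$, and the vector identity $\vec{MN}=\tfrac12\bigl((\vec B-\vec A)+(\vec D-\vec C)\bigr)$ together with $2(\vec B-\vec A)\cdot(\vec D-\vec C)=b^2+d^2-e^2-f^2$ gives the same conclusion. Note, however, that the paper does not actually prove this statement at all: Euler's Quadrilateral Theorem is quoted as a known result, and the only argument supplied is for the subsequent Generalized Quadrilateral Theorem, where the proof is simply ``write everything in coordinates and expand.'' So your synthetic argument via the median-length formula is a genuinely different (and more illuminating) route than anything in the paper; it explains \emph{why} the coefficients collapse rather than verifying that they do. Your closing remark that convexity is irrelevant is also on target --- it is exactly the content of the paper's generalization, which asserts the identity for arbitrary four points in $\mathbb{R}^3$ --- though be aware that the median-based version of your argument silently assumes $B$, $M$, $D$ are not collinear only in appearance; Apollonius's formula is an algebraic identity valid even for degenerate ``triangles,'' so no case analysis is actually needed.
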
 
This theorem can be generalized so that it applies  to four arbitrary points in $ \mathbb{R}  ^3 $ \cite{kandall2002classroom,knill2018some}.
\begin{theorem}[Generalized Quadrilateral Theorem]\label{thm:generalized-quadrilateral} 
    Suppose $ A, B, C , D $ are four points in $ \mathbb{R}  ^3 $ forming a tetrahedron of sides $ a = |AB| $, $ b = |BC| $ , $ c = |CD| $, $ d = |AD| $, $ e = |AC| $ and $ f = |BD| $. Then we have an identity for each pair of edges with no common vertex. More specifically we have:
     \begin{align*} 
        a ^2 + b ^2 + c ^2 + d ^2 - e ^2 - f ^2 & = 4 v _1 ^2 \\
        -a ^2 + b ^2 - c ^2 + d ^2 + e ^2 + f ^2 & = 4 v _2 ^2 \\
        a ^2 -b ^2 + c ^2 - d ^2 + e ^2 + f ^2 & = 4 v _3 ^2 \\
    \end{align*} 
where $ v _1 $ is the distance between the midpoints of the segments $AC $ and $BD$, $ v _2 $ is the distance between the midpoints of the segments $AB$ and $ CD$, and  $ v _3 $ is the distance between the midpoints of the segments $BC$ and $AD$.  
\end{theorem}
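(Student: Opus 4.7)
The plan is to reduce the three identities to the parallelogram law $|x+y|^2+|x-y|^2 = 2|x|^2+2|y|^2$ applied to the three pairs of opposite edge vectors of the tetrahedron. Placing the four points as position vectors $\vec A,\vec B,\vec C,\vec D\in\mathbb{R}^3$, the midpoint formulas immediately give
\[
4v_1^2 \;=\; \bigl|(\vec B+\vec D)-(\vec A+\vec C)\bigr|^2,
\]
together with the analogous expressions for $v_2$ (midpoints of $AB$ and $CD$) and $v_3$ (midpoints of $BC$ and $AD$).

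The heart of the argument is a triple application of the parallelogram law. Taking $x=\vec B-\vec A$ and $y=\vec C-\vec D$, so that $|x|=a$ and $|y|=c$, a direct rearrangement shows that $x-y$ and $x+y$ are precisely twice the midpoint differences appearing in $v_1$ and $v_3$ respectively, which yields
\[
a^2+c^2 \;=\; 2v_1^2 + 2v_3^2.
\]
Repeating the same computation with the opposite-edge pairs $(x,y)=(\vec C-\vec B,\vec D-\vec A)$ and $(x,y)=(\vec C-\vec A,\vec D-\vec B)$ produces the two companion identities $b^2+d^2 = 2v_1^2+2v_2^2$ and $e^2+f^2 = 2v_2^2+2v_3^2$.

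From these three linear relations the theorem follows immediately: each claimed identity is obtained by an elementary linear combination of the three. For instance $(a^2+c^2)+(b^2+d^2)-(e^2+f^2)=4v_1^2$ recovers the first equation of the theorem, and the remaining two come out by flipping signs in the obvious symmetric way. There is no serious obstacle in this plan---the whole proof is the parallelogram law applied three times together with a small $3\times3$ linear system. The only delicate point is bookkeeping, namely matching each $v_i$ to the pair of opposite edges whose midpoints it measures, so that the sign pattern of $a^2,\ldots,f^2$ on each left-hand side comes out correctly.
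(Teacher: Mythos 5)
Your proof is correct. Every step checks out: with $x=\vec B-\vec A$, $y=\vec C-\vec D$ one indeed gets $x-y=(\vec B+\vec D)-(\vec A+\vec C)$ and $x+y=(\vec B+\vec C)-(\vec A+\vec D)$, so the parallelogram law yields $a^2+c^2=2v_1^2+2v_3^2$, and the other two pairings give $b^2+d^2=2v_1^2+2v_2^2$ and $e^2+f^2=2v_2^2+2v_3^2$; the signed sums of these three relations reproduce the three identities exactly as you state. Your route differs from the paper's, which simply asserts that the formulas "can be verified by writing everything in coordinates and expanding out" --- a brute-force verification with no structure exhibited. What your approach buys is an explanation of \emph{why} there is one identity per pair of opposite edges: each of the three pairings of opposite edge vectors feeds the parallelogram law and couples exactly two of the quantities $v_1^2,v_2^2,v_3^2$, and the $3\times 3$ linear system is then invertible by inspection. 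It is also coordinate-free and valid in any inner product space, not just $\mathbb{R}^3$. The cost is only the bookkeeping you already flag (matching each $v_i$ to its pair of opposite edges), which you have carried out correctly.
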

\begin{proof}
    These formulas can be verified by writing everything in coordinates and expanding out.  
\end{proof}
Note that, in particular, this theorem applies to concave and convex quadrilateral.

We now restrict our attention to  the case the four points belong to a $ 2D $ affine subspace of  $ \mathbb{R}   ^3 $. In this case, we  can write the four points as:
 $ A = (A _x , A _y) $, $ B = (B _x , B _y) $, $ C = (C _x , C _y) $, and $ D = (D _x , D _y) $.
 From now on we also assume that the four points are distinct so that $a,b,c,d,e,f >0$.

%{\bf The next couple of paragraphs need some more work to make sure the definitions make sense.} Configurations of four points on the plane can be classified as   concave or convex.
%We call a configuration of four points with one point located strictly inside the convex hull of the other three a {\bf concave configuration}, whereas we call  a configuration that does not have a point contained in the convex hull of the other three points 
A {\bf concave configuration} of four points  has  one point which is located strictly inside the convex hull of
the other three, whereas a {\bf convex configuration} does not have a point  contained in the
convex hull of the other three points. We say that a configuration is degenerate if three or more points lie on the same straight line.
For a convex configuration we  say that the points are {\it ordered sequentially } if they are labelled consecutively while traversing the boundary of the quadrilateral (clockwise or counterclockwise). 
%See Figure \ref{fig:cyclic} for an example.

In this paper we will use the following notation.  When we say that the convex hull of the points $ A, B, C , D $ is 
  $ ACD $ we mean that  the configuration is concave with the point $ B $  in the interior of the triangle $ ACD $,
 and $ ACD $ is the order of the points  when traversing the boundary counterclockwise. % and, when traversing the boundary of the triangle counterclockwise, we first encounter $ A $ , then $ C $ and finally $ D $. 
  When we say that the convex hull is $ ADBC $ we  mean that  the configuration is convex and $ ADBC $ is be the order of the points when traversing the boundary counterclockwise. 

%For a convex configuration we  say that the points are {\it ordered sequentially } if they are labelled consecutively while traversing the boundary of the quadrilateral (clockwise or counterclockwise). 
%See Figure \ref{fig:cyclic} for an example.

Let 
\[\Delta _{ ABC } = \begin{vmatrix}
     A_x & A_y & 1 \\
     B_x & B_y & 1 \\
     C_x & C_y & 1
\end{vmatrix}, \quad  \Delta _{ ABD } = \begin{vmatrix}
     A_x & A_y & 1 \\
     B_x & B_y & 1 \\
     D_x & D_y & 1
\end{vmatrix}, 
\]
and 
\[\Delta _{ BCD } = \begin{vmatrix}
     B_x & B_y & 1 \\
     C_x & C_y & 1 \\
     D_x & D_y & 1
\end{vmatrix}, \quad  \Delta _{ ACD } = \begin{vmatrix}
     A_x & A_y & 1 \\
     C_x & C_y & 1 \\
     D_x & D_y & 1
\end{vmatrix}, 
\]
where $ \Delta _{ ABC } $ denotes twice  the signed area of the triangle $ ABC $. 
Signed means that the area is positive if the vertices of the triangle are ordered counterclockwise, negative if the vertices are ordered clockwise, and zero if the points are collinear.
The following theorem  is helpful in determining the convex hull of four points in $ \mathbb{R}  ^2 $ and 
in classifying configurations as concave or convex. \begin{theorem}\label{thm:hulls} Let $ A, B, C  $ and $ D $ be points on $ \mathbb{R}  ^2 $ as defined above, and let $N=\Delta _{ ABC } \cdot \Delta _{ ACD } $ and $ M = \Delta _{ ABD } \cdot \Delta _{ BCD } $. The configuration formed by the points $ ABCD $ is convex if  and only if 
    \[ N \geq 0 \mbox{ and } M \geq 0 \] 
    or \[ N \leq 0 \mbox{ and }  M \leq 0. \]
The configuration is convex and ordered sequentially if and only if  
 \[ N \geq 0 \mbox{ and } M \geq 0. \] 
Furthermore, if any of the $\Delta$'s is zero, the three points are collinear. If all the $ \Delta $'s are zero, then all four points are collinear and the hull will be either a line or a point. If all the $ \Delta $'s are non-zero and $ \Delta _{ ABC } >0 $ then the convex hull is given in the following table 
\begin{center}
\begin{tabular}{ |c|c|c|c|c| } 
 \hline
 $ABC$ & $ABD$ & $BCD$ & $ACD$ & Convex Hull \\
 \hline 
$ +$ & $+$ & $+$ &$+$ & $ABCD$ \\
$ +$ & $+$ & $-$ &$-$ & $ABDC$ \\ 
$ +$ & $-$ & $+$ &$-$ & $ADBC$ \\ 
$ +$ & $+$ & $+$ &$-$ & $ABC$ \\
$ +$ & $+$ & $-$ &$+$ & $ABD$ \\ 
$ +$ & $-$ & $+$ &$+$ & $BCD$ \\ 
$ +$ & $-$ & $-$ &$-$ & $CAD$ \\
$ +$ & $-$ & $-$ &$+$ &  Not realizable\\
 \hline
\end{tabular}
\end{center}
If $ \Delta _{ ABC }<0 $ we obtain an analogous table:
\begin{center}
\begin{tabular}{ |c|c|c|c|c| } 
 \hline
 $ABC$ & $ABD$ & $BCD$ & $ACD$ & Convex Hull \\
 \hline 
$ -$ & $-$ & $-$ &$-$ & $ADCB$ \\
$ -$ & $-$ & $+$ &$+$ & $ACDB$ \\ 
$ -$ & $+$ & $-$ &$+$ & $ACBD$ \\ 
$ -$ & $-$ & $-$ &$+$ & $ACB$ \\
$ -$ & $-$ & $+$ &$-$ & $ADB$ \\ 
$ -$ & $+$ & $-$ &$-$ & $BDC$ \\ 
$ -$ & $+$ & $+$ &$+$ & $CDA$ \\
$ -$ & $+$ & $+$ &$-$ &  Not realizable\\
 \hline
\end{tabular}
\end{center}
\end{theorem}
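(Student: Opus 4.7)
The plan is to reduce everything to the standard geometric interpretation of signed areas. Recall that the sign of $\Delta_{XYZ}$ records the orientation of $(X,Y,Z)$: positive when counterclockwise, negative when clockwise, and zero exactly when $X,Y,Z$ are collinear. Equivalently, for fixed $X,Y$, the sign of $\Delta_{XYP}$ tells which of the two open half-planes bounded by line $XY$ contains $P$. Permuting two indices negates the determinant, so $\Delta_{ACB}=-\Delta_{ABC}$, while cyclic permutations preserve it. From this one obtains the point-in-triangle test: with $\Delta_{XYZ}>0$, the point $P$ lies strictly inside $\triangle XYZ$ iff $\Delta_{XYP},\Delta_{YZP},\Delta_{ZXP}$ are all positive, and with $\Delta_{XYZ}<0$ iff all three are negative. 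The vanishing of any individual $\Delta$ puts the corresponding three points on a line, which gives the collinearity statements at the end of the theorem.

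By the overall orientation symmetry I handle $\Delta_{ABC}>0$ first; the $\Delta_{ABC}<0$ table follows from a reflection of the plane, which negates every $\Delta$. Using the identities above, the three signs $\Delta_{ABD},\Delta_{BCD},\Delta_{ACD}$ indicate, respectively, whether $D$ lies on $C$'s side of line $AB$, on $A$'s side of line $BC$, or on the opposite side from $B$ of line $CA$. The three side-lines of $\triangle ABC$ partition the plane into seven open regions (the triangle's interior, three edge-adjacent regions, and three vertex-opposite regions), matched bijectively with seven of the eight possible sign patterns. The single unrealizable pattern is $(+,-,-,+)$, for which $D$ would have to lie simultaneously in the outward open half-plane of each edge of $\triangle ABC$; a short linear-inequality check shows that the intersection of those three outward half-planes is empty.

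The seven realizable patterns then identify the convex hull directly. If $D$ is interior to $\triangle ABC$, the hull is $ABC$ (Case 4). If $D$ sits in the edge-adjacent region across $CA$, $BC$, or $AB$, then the hull is a convex quadrilateral whose cyclic order places $D$ between the endpoints of that edge, yielding $ABCD$, $ABDC$, or $ADBC$ (Cases 1, 2, 3). If $D$ sits in the vertex-opposite region at $A$, $B$, or $C$, the point-in-triangle test applied to that vertex shows that $A$, $B$, or $C$ lies inside the triangle of the other three, giving the concave hulls $BCD$, $CAD$, or $ABD$ (Cases 6, 7, 5). The reflection argument reverses all signs and all cyclic orders, yielding the $\Delta_{ABC}<0$ table.

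Finally I verify the convexity criterion. Using $\Delta_{ACB}=-\Delta_{ABC}$, the product $N=\Delta_{ABC}\cdot\Delta_{ACD}$ is positive iff $B$ and $D$ lie on opposite open sides of line $AC$, and analogously $M=\Delta_{ABD}\cdot\Delta_{BCD}$ is positive iff $A$ and $C$ lie on opposite open sides of line $BD$; both hold simultaneously iff segments $AC$ and $BD$ properly cross, which is exactly the condition for the hull to be $ABCD$ in sequential cyclic order. Row-by-row inspection of the tables shows that convex configurations (Cases 1--3 and their mirrors) always produce $N$ and $M$ of the same sign (hence $N\geq 0\wedge M\geq 0$ or $N\leq 0\wedge M\leq 0$), while concave configurations (Cases 4--7) always produce $N$ and $M$ of opposite signs; sequentially ordered convexity occurs exactly when $N\geq 0\wedge M\geq 0$, with equality covering the degenerate collinear boundary cases. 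The main obstacle is not conceptual but organizational: the eight-way case split together with the permutation bookkeeping for the $\Delta$'s must be carried out carefully and consistently.
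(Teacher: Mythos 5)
The paper offers no proof of this theorem at all: the author calls it ``fairly standard'' and cites only a stackoverflow post, so there is nothing internal to compare your argument against. Your proof is correct and supplies exactly what the paper omits. The half-plane reading of the signs is right (with $\Delta_{ABC}>0$, the sign of $\Delta_{ABD}$, $\Delta_{BCD}$, $-\Delta_{ACD}$ tells whether $D$ is on the triangle's side of $AB$, $BC$, $CA$ respectively), the seven-region partition reproduces both tables, and the identification $N>0 \Leftrightarrow B,D$ strictly separated by line $AC$, $M>0 \Leftrightarrow A,C$ strictly separated by line $BD$, correctly yields the diagonal-crossing characterization of sequential convexity; I checked all seven realizable rows and the signs of $N$ and $M$ come out as you claim. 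Two places deserve to be made explicit rather than asserted. First, the non-realizability of the pattern $(+,-,-,+)$ is cleanest via the linear identity $\Delta_{ABD}+\Delta_{BCD}-\Delta_{ACD}=\Delta_{ABC}$, which is violated by that sign pattern when $\Delta_{ABC}>0$; this is presumably your ``short linear-inequality check,'' but it should be written down. Second, the ``equality covering the degenerate collinear boundary cases'' clause of the iff is doing real work: when, say, $A,B,C$ are collinear one has $N=0$ while $M$ can be of either sign, and one must check that under the paper's definition (concave means a point \emph{strictly} inside the hull of the other three) such configurations count as convex and satisfy one of the two sign conditions --- they do, but this is a case analysis, not an automatic consequence of the tables, which assume all $\Delta$'s nonzero.
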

Although this result is fairly standard, the only reference to this theorem I am aware of is contained in this stackoverflow post \cite{comingstorm2018convex}.

The next theorem gives a criterion for four points with given coordinates to lie on  a  circle:
\begin{theorem}\label{thm:cyclic}
   Let    $ A = (A _x , A _y) $, $ B = (B _x , B _y) $, $ C = (C _x , C _y) $, and $ D = (D _x , D _y) $
be points on $ \mathbb{R}  ^2 $ then 
\begin{equation}\label{eqn:det}
    \mathcal{C} =  \begin{vmatrix}
    A _x ^2 + A _y ^2 &  A_x & A_y & 1 \\
     B _x ^2 + B _y ^2 & B_x & B_y & 1 \\
     C _x ^2 + C _y ^2 & C_x & C_y & 1\\
     D _x ^2 + D _y ^2 & D_x & D_y & 1 
\end{vmatrix}=0,
\end{equation} 
if and only if the four points lie on a circle or on a straight line. 

\end{theorem}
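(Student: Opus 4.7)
The plan is to interpret the determinant $\mathcal{C}$ as the condition for a certain homogeneous linear system to admit a nontrivial solution, and to identify the solutions of that system with circles or lines passing through the four points. The underlying observation is that every circle in $\mathbb{R}^2$, as well as every straight line, can be written uniformly as the zero set of an equation of the form
\[
 \alpha(x^2 + y^2) + \beta x + \gamma y + \delta = 0,
\]
where $(\alpha,\beta,\gamma,\delta) \neq (0,0,0,0)$. When $\alpha \neq 0$ (and the discriminant $\beta^2 + \gamma^2 - 4\alpha\delta$ is positive) this is a circle obtained by completing the square; when $\alpha = 0$ and $(\beta,\gamma)\neq (0,0)$ this is a line. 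Conversely, any circle or line has an equation of this form.

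First I would verify that this equation passes through each of the four points $A,B,C,D$ precisely when the quadruple $(\alpha,\beta,\gamma,\delta)$ satisfies the homogeneous linear system
\[
 \alpha(P_x^2 + P_y^2) + \beta P_x + \gamma P_y + \delta = 0, \qquad P \in \{A,B,C,D\}.
\]
The coefficient matrix of this $4\times 4$ system is exactly the matrix appearing in $\mathcal{C}$, so by standard linear algebra the system admits a nontrivial solution if and only if $\mathcal{C} = 0$.

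Next I would translate the existence of such a nontrivial solution back into geometry. If $\mathcal{C} = 0$, pick any nontrivial $(\alpha,\beta,\gamma,\delta)$ in the kernel; the zero set of the corresponding equation contains all four points and is either a circle, a line, a single point, or the empty set. Since the zero set contains four distinct points, the last two possibilities are excluded, so the four points are concyclic or collinear. Conversely, if the four points lie on a common circle or line, we take $(\alpha,\beta,\gamma,\delta)$ to be the coefficients of its equation; this is a nontrivial element of the kernel, hence $\mathcal{C} = 0$.

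The argument is essentially a clean application of the rank–nullity theorem, and I do not expect a serious obstacle. The only point that requires a small amount of care is ruling out the degenerate zero sets (a point or the empty set) when reading $\mathcal{C}=0$ back as concyclicity/collinearity; this is handled by observing that the four points $A,B,C,D$ are assumed distinct, so the zero set has at least four elements and cannot degenerate. No computation beyond the setup of the determinant and the linear system is needed.
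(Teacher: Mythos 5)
Your proof is correct. It rests on the same key fact as the paper's --- that circles and lines are precisely the nonempty, nondegenerate zero sets of $\alpha(x^2+y^2)+\beta x+\gamma y+\delta=0$ --- but reaches the determinant by a different mechanism. The paper expands $\mathcal{C}$ by cofactors along a row with a variable point $(x,y)$, producing one explicit equation of this form with $\alpha=\Delta_{BCD}$ that automatically passes through $B$, $C$, $D$; the hypothesis $\mathcal{C}=0$ then forces $A$ onto the same curve, and the circle-versus-line dichotomy is read off from whether $\Delta_{BCD}$ vanishes. You instead treat the matrix as the coefficient matrix of the homogeneous system in the unknowns $(\alpha,\beta,\gamma,\delta)$ and invoke the singular-matrix/nontrivial-kernel equivalence. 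Your route is more symmetric in the four points and, importantly, delivers both directions of the ``if and only if'' at once, whereas the paper only argues the forward implication explicitly; you are also more careful in excluding the degenerate zero sets (a single point or the empty set) via the distinctness of the four points, a step the paper glosses over (in the paper's version one must likewise check that not all four cofactors vanish when $\Delta_{BCD}=0$, which again uses that $B$, $C$, $D$ are distinct). What the cofactor expansion buys in return is the explicit identification $\alpha=\Delta_{BCD}$, which makes the case split concrete; your abstract existence argument does not need it.
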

\begin{proof} 
    Assume equation \eqref{eqn:det} is satisfied. 
The  expression
\[  \begin{vmatrix}
     x ^2 + y ^2 &  x & y & 1 \\
     B _x ^2 + B _y ^2 & B_x & B_y & 1 \\
     C _x ^2 + C _y ^2 & C_x & C_y & 1\\
     D _x ^2 + D _y ^2 & D_x & D_y & 1 
\end{vmatrix}=0,
\]
can be written as 
\begin{equation}\label{eqn:circle-line} 
    \alpha (x ^2 + y ^2) + \beta x + \gamma y + \delta = 0, 
\end{equation} 
where  $ \alpha = \Delta _{ BCD } $.  It is evident from equation \eqref{eqn:circle-line}  that when $ \alpha \neq 0 $  the points $ A,B,C,D $ lie on a circle, and when  $ \alpha  = 0 $ the points $ A, B,C $ and $ D $ lie on a straight line. 

\end{proof} 

We will now recall a few facts from algebraic geometry, see \cite{cox2013ideals} for more details. 
 \begin{definition} 
     Let $k$ be an arbitrary field and  $ I \subset k[x _1 , \ldots x _n ] $  an ideal.  The {\bf radical } of $I$, denoted by $ \sqrt{ I } $, is the set 
     \[
         \{ f : f ^m \in I \mbox{ for some integer } m \geq 1 \}.
     \]
 \end{definition} 
We now give a theorem to test whether $ f \in \sqrt{ I } $:
\begin{proposition}[Radical Membership]\label{prop:RadicalMembership}  
    Let $ k $ be an arbitrary field and let $ I = \left\langle f _1,\ldots, f _s \right\rangle \subset k[x _1 , \ldots , x _n ] $ be an ideal. Then $ f \in \sqrt{ I } $ if and only if the constant polynomial $ 1 $ belongs to the ideal $ \tilde I = \left\langle f _1 , \ldots , f _s , 1 - y f \right\rangle \subset k [ x _1 , \ldots x _n , y ] $.
\end{proposition}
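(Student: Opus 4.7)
The plan is to prove the two directions separately. The forward direction is a direct algebraic identity, while the backward direction uses the Rabinowitsch trick: substitute $y = 1/f$ in the relation witnessing $1 \in \tilde{I}$ and then clear denominators.

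For the forward direction, suppose $f \in \sqrt{I}$, so $f^m \in I$ for some integer $m \geq 1$. I would exploit the telescoping identity
\[
1 = (yf)^m + (1 - yf)\bigl(1 + yf + (yf)^2 + \cdots + (yf)^{m-1}\bigr).
\]
Since $f^m \in I$, the term $(yf)^m = y^m f^m$ lies in $\tilde{I}$, and the remaining term is a multiple of the generator $1 - yf$ and thus also lies in $\tilde{I}$. Hence $1 \in \tilde{I}$, which is the desired conclusion.

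For the backward direction, suppose $1 \in \tilde{I}$, so there exist polynomials $g_1, \ldots, g_s, h \in k[x_1,\ldots,x_n,y]$ with
\[
1 = \sum_{i=1}^{s} g_i(x_1,\ldots,x_n,y)\, f_i(x_1,\ldots,x_n) + h(x_1,\ldots,x_n,y)\bigl(1 - y f(x_1,\ldots,x_n)\bigr).
\]
The trick is to treat this identity inside the field of rational functions $k(x_1,\ldots,x_n)$ and substitute $y = 1/f$ (assuming $f \neq 0$; the case $f = 0$ gives $f \in \sqrt{I}$ trivially). The factor $1 - yf$ vanishes, leaving
\[
1 = \sum_{i=1}^{s} g_i(x_1,\ldots,x_n,1/f)\, f_i(x_1,\ldots,x_n).
\]
Choosing $N$ larger than the $y$-degree of every $g_i$ and multiplying both sides by $f^N$ turns each $f^N g_i(x,1/f)$ into a genuine polynomial in $k[x_1,\ldots,x_n]$, yielding $f^N \in I$ and hence $f \in \sqrt{I}$.

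The main obstacle, and the only step needing care, is ensuring that the substitution $y = 1/f$ is handled rigorously: one must verify that clearing denominators by the appropriate power of $f$ produces a polynomial identity with coefficients in $k[x_1,\ldots,x_n]$, so that the conclusion $f^N \in I$ is honest. Everything else is straightforward algebraic manipulation.
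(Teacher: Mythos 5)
Your proof is correct: the forward direction via the telescoping identity $1-(yf)^m=(1-yf)\bigl(1+yf+\cdots+(yf)^{m-1}\bigr)$ and the backward direction via the Rabinowitsch substitution $y=1/f$ followed by clearing denominators are both sound, and you correctly flag the one delicate point (that $f^{N}g_i(x,1/f)$ is an honest polynomial and that the resulting identity, valid in $k(x_1,\ldots,x_n)$, descends to $k[x_1,\ldots,x_n]$ because the polynomial ring is a domain). The paper itself gives no proof of this proposition --- it is recalled from the standard reference \cite{cox2013ideals} --- and your argument is precisely the standard one found there, so there is nothing to compare beyond confirming that your write-up is complete and correct.
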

From this  it follows that to determine whether or not  $ f \in \sqrt{ \left\langle f _1 , \ldots , f _s \right\rangle } $ it is enough to compute a reduced Groebner basis of the ideal $  \left\langle f _1 , \ldots , f _s , 1 - y f \right\rangle $ with respect to some ordering. If the result is $ \{ 1 \} $, then $ f \in \sqrt{ I } $, otherwise $ f \neq \sqrt{ I } $. Note that the  Maple \texttt{Basis} command  always computes a reduced Grobner bases, so it is easy to use this approach.  
%----------------------------------------------------------------
\section{Some Polynomial Expressions and Cyclic Quadrilaterals}\label{sec:cyclic}
%----------------------------------------------------------------
%Consider a cyclic quadrilateral with consecutive sides $ a,b,c,d $ and diagonals $ e , f $. 
Pech \cite{10.1007/978-3-642-21898-9_34} considered the following polynomials and their relation with cyclic quadrilaterals. 
%\begin{equation} 
\begin{align} P & = ac + b d - ef\\
               Q & = ac + bd + ef\\
               S & = e(ab+cd)-f(ad+bc)\\
               K & = e ^2 (ab + cd) - (a ^2 + b ^2) cd - (c ^2 + d ^2) ab\label{eqn:K}\\
               R & = (bc + ad) ^2 - e ^2 (a ^2 + b ^2 + c ^2 + d ^2 - e ^2 - f ^2) \\
               W & =ac(-a^2-c^2+b^2+d^2+e^2+f^2) \nonumber\\
               & +bd(a^2+c^2-b^2-d^2+e^2+f^2) -ef(a^2+c^2+b^2+d^2-e^2-f^2)
\end{align}
%\end{equation} 
It is not difficult to check that the polynomials above verify the following identities
\begin{align}
    -PW+S^2+CM/2& = 0 \label{eqn:identity1}\\ 
    -eS+K+(bc+ad)P& =0 \label{eqn:identity2}\\
    2(K^2-PQR)+e^2CM & =0,\label{eqn:identity3}
\end{align} 
where $CM=288V^2 $ denotes  the Cayley-Menger determinant.  The first and third equations above provide interesting way to organize the terms of the Cayley-Menger determinant.  The first of these identities has been useful in studying cyclic central configurations in Celestial Mechanics \cite{cors2012four,santoprete2021cocircular}. 
%If $ a,b,c,d,e,f $ are the distances between four points in  $ \mathbb{R}  ^3 $, then $ V $ is the volume of the 3-simplex of edge lengths $ a,b,c,d,e,f $, and 
% As we have already seen the equation$ CM=0 $ gives a planarity condition by

%\begin{theorem}
%    Let $ A, B, C , D $ be four points in $ \mathbb{R}  ^2 $ and let $ a,b,c,d,f$ be the distances between such points. If $ P = 0 $ then $ M, N \geq 0 $.  
%\end{theorem} 

%If we choose $ A = (0, 0) $, $ B = (a, 0) $ , $ C = (u, v) $, and $ D = (w, z) $, as in \cite{10.1007/978-3-642-21898-9_34} then we get 
%\[\mathcal{C} = a (- a v w + v w ^2 + a u z - u ^2 z - v ^2 z - v ^2 z - v ^2 z + vz ^2) = 0. \]
%Note that that if we take $ v = w = 0 $ then the four points are collinear and $ \mathcal{C} = 0 $.
We begin by  stating  without proof the well known Ptolemy's theorem:
\begin{theorem}[Ptolemy's Theorem]\label{thm:Ptolemy}
   Let $ A, B, C ,D $ be four points lying in counterclockwise  or clockwise order  on a circle, then $ P = 0 $. 
 The same conclusion holds if the 4 points lie in order on a straight line.
\end{theorem}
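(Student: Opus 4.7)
The plan is to split along the statement's dichotomy and handle the four collinear case by a direct parametric computation and the four concyclic case by the classical similar-triangles argument.

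For the collinear case, place $A, B, C, D$ on a line in that order and set $a = |AB|$, $b = |BC|$, $c = |CD|$, so that $d = a + b + c$, $e = a + b$, and $f = b + c$. A short expansion then gives $ac + bd = ef$, so $P = 0$.

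For the cyclic case, let $A, B, C, D$ lie in counterclockwise order on a circle and construct $K$ on the diagonal $AC$ satisfying $\angle ABK = \angle DBC$. Two similar-triangle pairs fall out of the inscribed-angle theorem. First, $\triangle ABK \sim \triangle DBC$, since the angles at $B$ agree by construction and $\angle BAK = \angle BAC = \angle BDC$ both subtend arc $BC$; this yields $AK \cdot f = ac$. Second, $\triangle KBC \sim \triangle ABD$, since subtracting the two equal angles at $B$ gives $\angle KBC = \angle ABD$ and $\angle BCK = \angle BCA = \angle BDA$ both subtend arc $AB$; this yields $KC \cdot f = bd$. Adding and using $AK + KC = e$ gives $ef = ac + bd$, so $P = 0$.

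The one step requiring care is the identity $AK + KC = e$, which depends on $K$ lying in the interior of segment $AC$; this is where the cyclic order hypothesis actually bites. If the four points occurred in the crossed order $ABDC$ on the circle, the construction would place $K$ on an extension of $AC$ and produce an identity of the form $|ac - bd| = ef$ rather than $ac + bd = ef$. I would handle this by verifying that the cyclic ordering forces $BD$ to cross $AC$ in the interior of the quadrilateral, which in turn forces $K$ into the interior of $AC$. The clockwise case follows from the counterclockwise one by reflection, which preserves all six pairwise distances.
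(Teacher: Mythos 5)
The paper states this theorem explicitly without proof (``We begin by stating without proof the well known Ptolemy's theorem''), so there is no argument of the author's to compare against; your proposal supplies the standard classical proof, and it is correct. The collinear computation checks out: with $d=a+b+c$, $e=a+b$, $f=b+c$ one gets $ef=ac+bd$ directly. The concyclic case is the textbook similar-triangles argument, and you have correctly identified the only delicate point, namely that $K$ must lie in the interior of $AC$ so that $AK+KC=e$; the cleanest way to discharge it is to note that for the cyclic order $ABCD$ the ray $BD$ lies inside the angle $\angle ABC$, so $\angle DBC < \angle ABC$, and hence the ray from $B$ making angle $\angle DBC$ with $BA$ (on the same side) also lies inside $\angle ABC$ and therefore meets the segment $AC$ at an interior point. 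Your side remark that the crossed order would instead yield $|ac-bd|=ef$ is accurate and in fact dovetails with the polynomials $P_T=ac-bd+ef$ and $Q_T=ac-bd-ef$ that the paper introduces later, so the observation is worth keeping.
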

Next we state  and prove of  the so called Converse of Ptolemy's Theorem, following the approach of Pech  \cite{10.1007/978-3-642-21898-9_34}. We include this theorem and its proof because we want to include the case of a straight line which was not explicitly considered in Pech's paper. 
\begin{theorem}[Converse of Ptolemy's Theorem]\label{thm:Converse-Ptolemy} 
    Let $ A, B , C $ and $ D $ be four points in the plane that satisfy  $ P = 0 $ then they  form a convex  configuration (or quadrilateral) and lie on a circle or a straight line. If they lie on a circle their counterclockwise order is either $ ABCD $ or $ ADCB $. 
\end{theorem}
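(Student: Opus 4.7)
The plan is to follow Pech's algebraic approach in \cite{10.1007/978-3-642-21898-9_34}: use the identities \eqref{eqn:identity1} and \eqref{eqn:identity2} to reduce the hypothesis $P=0$ (combined with the planarity relation $CM=0$) to the polynomial condition $K=0$, translate $K=0$ into a supplementary-opposite-angles statement via the law of cosines, and then read off the cyclic/collinear geometric conclusion. The collinear case, which is the main novelty relative to Pech's statement, is handled separately at the end through the degeneracy of the determinant $\mathcal{C}$ in Theorem \ref{thm:cyclic}.

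First I would deduce $K=0$ from the hypothesis. Since the four points are coplanar we have $CM=0$. Substituting $P=0$ and $CM=0$ into identity \eqref{eqn:identity1} gives $S^{2}=0$, hence $S=0$. Substituting $P=0$ and $S=0$ into identity \eqref{eqn:identity2} and using $e>0$ then gives $K=0$.

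Next I would interpret $K=0$ geometrically. Letting $\beta=\angle ABC$ and $\delta=\angle ADC$ be the interior angles at $B$ and $D$ in the triangles $ABC$ and $ACD$ sharing the diagonal $AC$, the law of cosines gives $\cos\beta=(a^{2}+b^{2}-e^{2})/(2ab)$ and $\cos\delta=(c^{2}+d^{2}-e^{2})/(2cd)$. Clearing denominators in $\cos\beta+\cos\delta$ and comparing with \eqref{eqn:K} produces the identity
\[ K = -2abcd(\cos\beta+\cos\delta). \]
Since $abcd>0$ in the nondegenerate case, $K=0$ forces $\cos\beta+\cos\delta=0$, and because $\beta,\delta\in(0,\pi)$ this means $\beta+\delta=\pi$. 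If instead the points are collinear, $\beta$ and $\delta$ belong to $\{0,\pi\}$ and the configuration is read off directly from the vanishing of $\mathcal{C}$ in Theorem \ref{thm:cyclic} (the case $\alpha=\Delta_{BCD}=0$).

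Finally I would assemble the conclusion. The condition $\beta+\delta=\pi$ with $B$ and $D$ on opposite sides of the line $AC$ is precisely the converse of the inscribed-angle theorem, placing the four points on a common circle so that $ABCD$ is a convex quadrilateral whose counterclockwise order on the circle is $ABCD$ or its reverse $ADCB$. The main obstacle is showing that $B$ and $D$ must indeed lie on opposite sides of $AC$: the polynomial $K$ sees only unoriented distances and is blind to this sign information. I would overcome this by appealing to the sign data recorded in Theorem \ref{thm:hulls}, combined with Ptolemy's inequality $ef\le ac+bd$, which holds for any four planar points and whose strict form rules out every non-convex or crossed configuration compatible with $P=0$, thereby forcing both convexity and the claimed cyclic order.
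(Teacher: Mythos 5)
Your first two steps are sound: the chain $P=0,\ CM=0 \Rightarrow S=0 \Rightarrow K=0$ from identities \eqref{eqn:identity1} and \eqref{eqn:identity2} is exactly the argument the paper uses later for the equivalence $(K=0\wedge S=0)\Leftrightarrow P=0$, and your identity $K=-2abcd(\cos\beta+\cos\delta)$ is correct. The gap is in the final step. The condition $K=0$, i.e.\ $\angle ABC+\angle ADC=\pi$, does \emph{not} by itself force concyclicity: the paper's Theorem \ref{thm:R=0} exhibits a whole family of non-cyclic configurations (quadrilaterals folded across the diagonal $AC$, with $B$ and $D$ on the \emph{same} side of that line) which satisfy $K=0$ yet have $P>0$. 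So everything hinges on your claim that $B$ and $D$ lie on opposite sides of $AC$, and the tool you invoke for this --- that the strict form of Ptolemy's inequality rules out every non-convex or crossed configuration compatible with $P=0$ --- is precisely the equality characterization of Ptolemy's inequality, which \emph{is} the theorem being proved. As written the argument is circular: if you may cite that equality in $ef\le ac+bd$ occurs only for convex cyclic (or collinear) quadrilaterals in the order $ABCD$, the theorem follows in one line and your $K=0$ analysis is superfluous; if you may not, you have established only the supplementary-angle condition, which is strictly weaker. To close the gap you would need an independent proof of that equality case (e.g.\ by inversion at $A$ and the triangle inequality for the images of $B$, $C$, $D$), at which point that argument becomes the whole proof. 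The treatment of the collinear case is likewise only sketched.

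For comparison, the paper takes a different route that avoids this issue entirely: it places the points in coordinates, shows by a radical-membership (Gr\"obner basis) computation that the concyclicity determinant $\mathcal{C}$ of Theorem \ref{thm:cyclic} lies in $\sqrt{\langle f_1,\dots,f_5,P\rangle}$, so that $P=0$ directly forces the points onto a circle or a line, and then establishes convexity and the ordering by eliminating variables to obtain explicit formulas for $N$ and $M$ as manifestly nonnegative rational functions of $a,b,c,d$ (products of the four quadrilateral inequalities divided by $4(ab+cd)^2$ and $4(bc+ad)^2$ respectively); Theorem \ref{thm:hulls} then yields convexity and the sequential order $ABCD$ or $ADCB$. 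Your appeal to ``the sign data recorded in Theorem \ref{thm:hulls}'' would need exactly this kind of explicit sign computation to become effective.
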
 

\begin{proof} 
Without loss of generality we choose $ A = (0, 0) $, $ B = (a, 0) $ , $ C = (u, v) $, and $ D = (w, z) $, as in \cite{10.1007/978-3-642-21898-9_34}. With this assumption the  expression for $ \mathcal{C} $ given in equation \=ref{eqn:det} can be written as  
\[\mathcal{C} = a (- a v w + v w ^2 + a u z - u ^2 z - v ^2 z - v ^2 z - v ^2 z + vz ^2) = 0. \]
%Note that that if we take $ v = w = 0 $ then the four points are collinear and $ \mathcal{C} = 0 $.
We also define the following polynomials
\begin{align*} 
f _1 & = (u-a)^2 + v ^2 - b ^2 \\
f _2 & = (w-u) ^2 + (z-v) ^2 - c ^2\\
f _3 & = w^2 + z ^2 - d ^2 \\
f _4 & = u ^2 + v ^2 - e ^2 \\
f _5 & = (w-a)^2+z^2-f^2
\end{align*}
which describe the quadrilateral algebraically. 

Suppose  that $ P = 0 $ and consider the ideal $ I = \left\langle f _1 , f _2 , f _3 , f _4,f_5\right\rangle  $.
A reduced Groebner basis for the ideal $  
\left\langle f _1 , \ldots , f _5 , P , 1 - t \mathcal{C} \right\rangle $ is $\{  1 \} $. Therefore, by  Proposition \ref{prop:RadicalMembership} we have that $ \mathcal{C} \in \sqrt{ I \cup \{ P \} } $.
%We use Proposition \ref{prop:RadicalMembership} to prove that $ \mathcal{C} \in \sqrt{ I \cup \{ P \} } $ by showing that a reduced Groebner basis for the ideal $  
%\left\langle f _1 , \ldots , f _5 , P , 1 - t \mathcal{C} \right\rangle $ is $\{  1 \} $. 
%With a computation it is possible to show that $ C^2  \in  I \cup \{P\} $.
This shows that  $ P = f _1 = f _2 = f _3 = f _4 = f _5 = 0 $ implies $ 
\mathcal{C} = 0 $. It follows by Theorem \ref{thm:cyclic} that the points either lie on a circle or on a straight line. 

We now prove convexity. Consider the ideal $ K = I \cup \{P\} \cup \{N - t\}$, where $ t $ is a slack variable and $ N $ is defined in Theorem \ref{thm:hulls}. Eliminating the variables $ e, f, u , v , w, z $ in $ K $ it is possible to show  that 
\[
    N = \frac{ abc d (- a + b + c + d) (a - b + c + d) (a + b - c + d) (a + b  + c - d) } { 4 (ab + cd) ^2 }.       
\]
A similar computation  shows that
\[
    M = \frac{ abc d (- a + b + c + d) (a - b + c + d) (a + b - c + d) (a + b  + c - d) } { 4 (bc + ad) ^2 }.       
\]
From the triangle inequality it follows that $ (- a + b + c + d) \geq 0 $, $ (a - b + c + d) \geq 0 $, etc., where equality means that the four points lie on  the same line. This shows that the configuration is convex. Moreover, if the points do not lie on the same line then $ M , N >0 $, and all the triangles $ \Delta _{ ABC } $, etc. are non zero. Thus, by Theorem \ref{thm:hulls} it follows that their counterclockwise order must be either $ ABCD  $ or $ ADBC $. See Figure \ref{fig:cyclic} for an example of a cyclic quadrilateral with counterclockwise order $ABCD$. 

\end{proof}
\begin{remark}
   Of course there are two other polynomials similar to $ P $, these are $ P _T $ and $ Q _T  $ and are defined in equations \eqref{eqn:PT} and \eqref{eqn:QT} respectively.
   $ P _T $ appears when Ptolemy's theorem is stated for quadrilaterals with diagonals $ b $ and $ d $. 
$ Q _T $ appears when Ptolemy's theorem is stated for quadrilaterals with diagonals $ a $ and $ c $. These polynomials will play an important role later on in this paper. 

   % These polynomial appear if Ptolemy's theorem is stated for a different ordering of the points $ A, B , C $ and $ D $. The polynomials $ P _T $ and $ Q _T $ will play an important role later on in this paper.  
\end{remark} 
We now state a prove a theorem that shows how the conditions $ P = 0 $ , $ K = 0 $ and $ S = 0 $ are related. This theorem was proved in  Pech \cite{10.1007/978-3-642-21898-9_34}, but here we give a very simple proof that does not use tools of algebraic geometry. 
\begin{theorem}
 Let $ A, B , C $ and $ D $ be four points in the plane then 
 \[
     (K = 0 \wedge S = 0) \Leftrightarrow P = 0. 
 \]
\end{theorem}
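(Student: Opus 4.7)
The plan is to exploit the algebraic identity \eqref{eqn:identity2}, namely $-eS + K + (bc+ad)P = 0$, which reduces the theorem to producing just one of $K=0$ or $S=0$ from $P=0$ by geometric means; the identity then supplies the other.

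The easy direction is $(\Leftarrow)$. Assuming $K=0$ and $S=0$, identity \eqref{eqn:identity2} collapses to $(bc+ad)P=0$. Since all four side-lengths are strictly positive, $bc+ad>0$, and therefore $P=0$.

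For the hard direction $(\Rightarrow)$, I would proceed as follows. Assuming $P=0$, Theorem \ref{thm:Converse-Ptolemy} (the Converse of Ptolemy) tells us that the four points either lie in convex order on a circle or lie on a straight line. In the circular case I would apply the law of cosines in the two triangles $ABC$ and $ACD$ sharing the diagonal $e=|AC|$: since $ABCD$ is cyclic, the opposite angles $\angle ABC$ and $\angle ADC$ are supplementary, so
\[
\frac{a^2+b^2-e^2}{2ab} + \frac{c^2+d^2-e^2}{2cd} = \cos(\angle ABC) + \cos(\angle ADC) = 0.
\]
Clearing denominators and rearranging gives exactly $K=0$. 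In the collinear case I would treat it separately by a short direct computation: with the points in order on a line one has $e=a+b$, $f=b+c$, $d=a+b+c$, and substituting into $K$ shows that it factors as $ab[(a+b)^2-(c-d)^2]$, which vanishes. Having established $K=0$ in either subcase, identity \eqref{eqn:identity2} with $P=0$ and $K=0$ yields $-eS=0$, and since $e>0$ we conclude $S=0$.

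The main obstacle is minor: it is just the law-of-cosines computation producing $K=0$ from the cyclic hypothesis, together with confirming the collinear degenerate case. Everything else is pure algebraic manipulation of the stated identity.
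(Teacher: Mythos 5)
Your proof is correct, and your backward direction coincides with the paper's, but your forward direction takes a genuinely different route. The paper's argument for $P=0\Rightarrow(S=0\wedge K=0)$ is purely algebraic and three lines long: planarity gives $CM=0$, so identity \eqref{eqn:identity1}, $-PW+S^2+CM/2=0$, with $P=0$ forces $S^2=0$ and hence $S=0$, after which \eqref{eqn:identity2} yields $K=0$. You instead first get $K=0$ by invoking Theorem \ref{thm:Converse-Ptolemy} together with the supplementary-opposite-angles property of cyclic quadrilaterals and the law of cosines, then recover $S=0$ from \eqref{eqn:identity2}. What this buys you is geometric transparency: it explains \emph{why} $K$ vanishes (it is literally the cosine law applied to the two triangles sharing the diagonal $e$), which is exactly the interpretation the paper develops later in its remarks. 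What it costs you is twofold. First, the stated purpose of this theorem in the paper is to give ``a very simple proof that does not use tools of algebraic geometry,'' whereas the Converse of Ptolemy is proved in this paper by Gr\"obner basis and radical membership computations, so your route quietly reintroduces the machinery the paper is trying to avoid. Second, your collinear subcase has a small unjustified step: you assume the points appear on the line in the order $A,B,C,D$, which does follow from the equality case of Ptolemy's inequality but is not asserted by Theorem \ref{thm:Converse-Ptolemy} as stated; a cleaner patch (which also subsumes the cyclic case) is to note that identity \eqref{eqn:identity3}, $2(K^2-PQR)+e^2CM=0$, with $P=0$ and $CM=0$ forces $K^2=0$ directly, with no case analysis at all.
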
 
\begin{proof} 
    Suppose $ P = 0 $. Since the configuration is planar then $ CM = 0 $. Thus,  equation \eqref{eqn:identity1} gives $ S  = 0 $. Since $ S = P = 0 $ equation \eqref{eqn:identity2} also gives $ K = 0 $. 
    Now assume that $ K = 0 $ and $ S = 0 $. Then, from equation \eqref{eqn:identity2} it follows that $ ( b c + a d) P = 0 $, but since $ a, b, c, d>0 $ this means that $ P = 0 $.  
\end{proof} 
The next theorem shows that the condition $ R = 0 $ is somewhat complementary to $ P = 0 $. In fact,  while the figure obtained by imposing  $ R = 0 $ still has two supplementary angles, it is quite different from a cyclic quadrilateral. 
A convex example of such configuration  is given in  Figure \ref{fig:supplementary}, while a concave example is given in Figure \ref{fig:supplementary-concave}. Note that in both these examples these configurations can be obtained by folding a cyclic quadrilateral about one of the diagonals. The following theorem appears to be new.  
%One example of such configuration is given in  Figure \ref{fig:supplementary}. Note that the configuration in   Figure \ref{fig:supplementary}   can be obtained  by folding a cyclic quadrilateral about one of the diagonals. 
\begin{figure}[!tbp]
  \centering
  \subfloat[]{\includegraphics[width=0.35\textwidth]{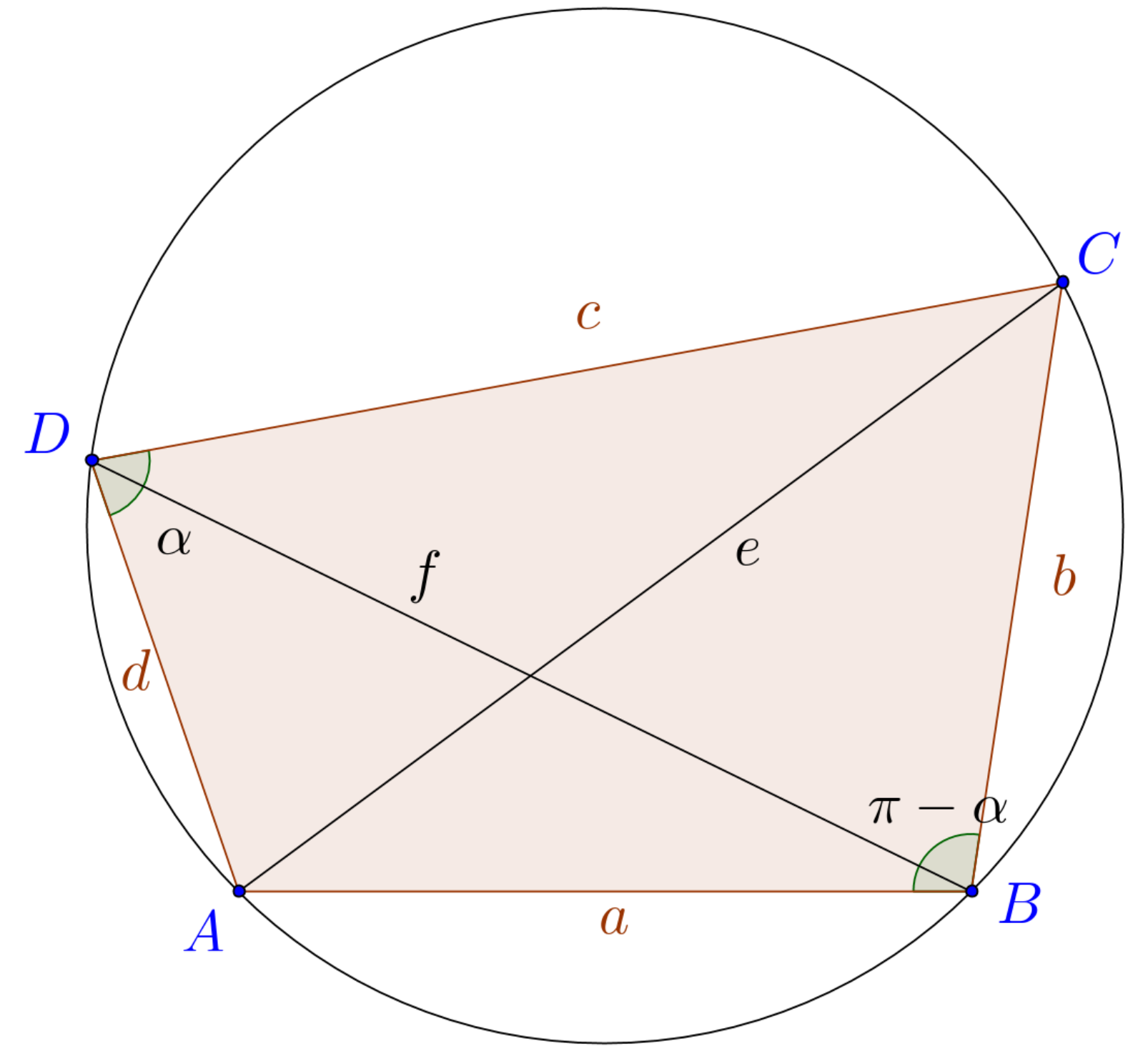}\label{fig:cyclic}}
  \hfill
  \subfloat[]{\includegraphics[width=0.6\textwidth]{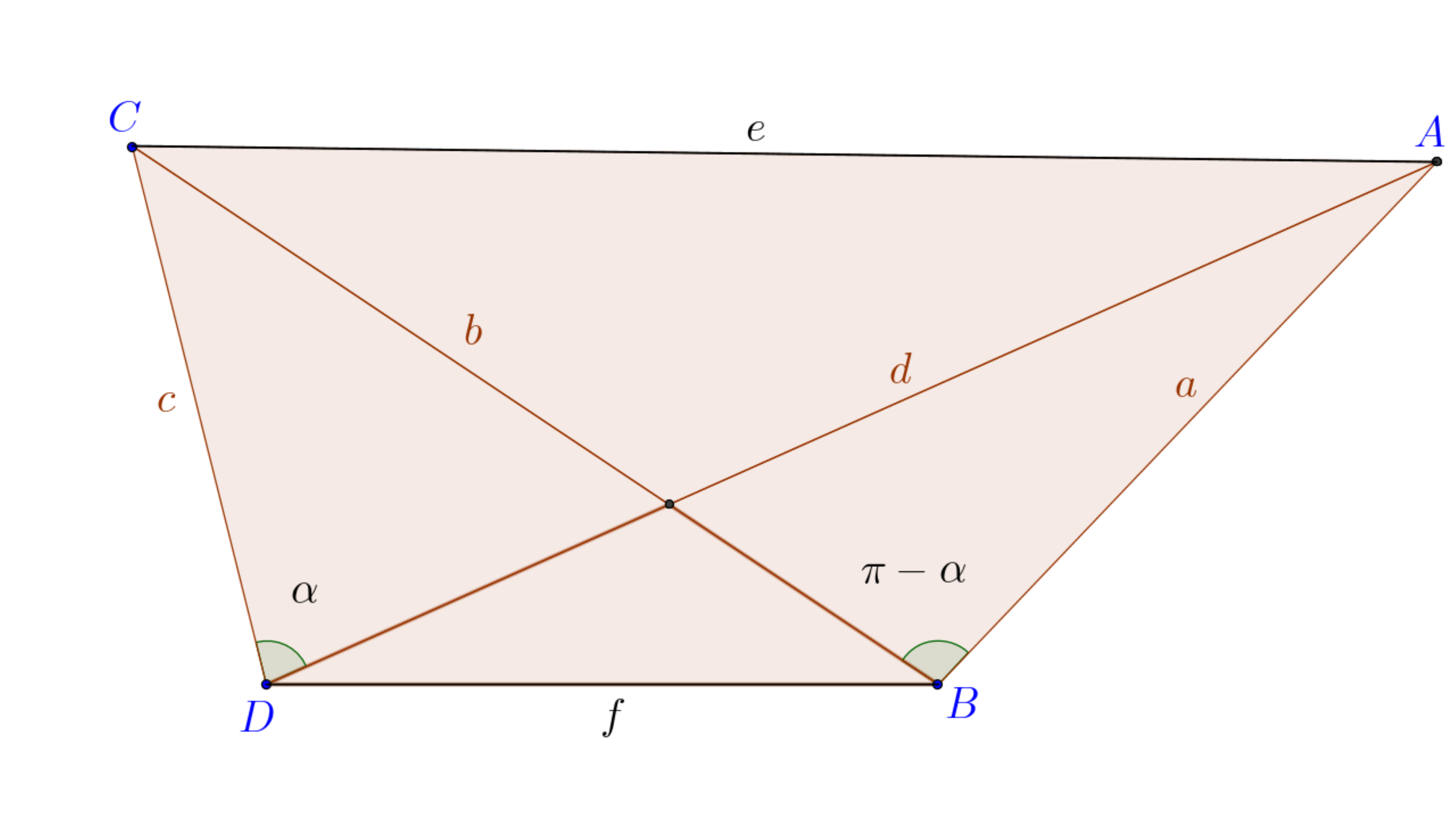}\label{fig:supplementary}}
  \caption{(a)  A cyclic quadrilateral with supplementary angles $\angle CDA $ and $ \angle CBA$. (b) A quadrilateral  $ ACDB $ with  supplementary angles $\angle CDA $ and $ \angle CBA$.}
\end{figure}

\begin{theorem}\label{thm:R=0}
     Let $ A, B , C $ and $ D $ be four points in the plane that satisfy  $ R = 0 $ then the angles  $\angle CDA  $ and $\angle CBA  $ are supplementary.  
     Moreover, if  all the $ \Delta  $'s are nonzero then 
     \begin{enumerate}
         \item If  $ \Delta _{ ABC } <0 $ the convex hull  of the points $ A, B , C, $ and $ D $ is one of $ ACB $,  $CDA$, $ ACDB $, and $ ACBD $. 
         \item If  $ \Delta _{ ABC }  >0 $, then  the convex hull is one of  $ABC$, $CAD$, $ABDC$, and $ ADBC $. 
     \end{enumerate}  
    Finally, If at least one of the $ \Delta  $'s is zero then we have the following degenerate cases
     \begin{enumerate}
         \item The points $ A,B,C $ and $ D $ are collinear.  
         \item The points $ A, B  $ and $ D  $ are collinear and $ b = c $, see Figure \ref{fig:degenerate-case-1}. 
         \item The points $ B , C $ and $ D $ are collinear and $ a = d $ , see Figure \ref{fig:degenerate-case-2}.   
     \end{enumerate}  

     %Moreover, If $ \Delta _{ ABC } <0 $ the convex hull  of the points $ A, B , C, $ and $ D $ is one of $ ACB $, $ ACDB $, and $ ADBC $.    If $ \Delta _{ ABC }  >0 $, then  the convex hull is one of  $ABC$, $ABDC$, and $ ADBC $. 
\end{theorem}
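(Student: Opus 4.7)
The plan proceeds in three stages, parallel to the Converse of Ptolemy proof.

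For the supplementary-angle claim I would invoke identity \eqref{eqn:identity3}: planarity forces $CM=0$, so the identity collapses to $K^2=PQR$, and $R=0$ gives $K=0$. Applying the law of cosines in the triangles $ABC$ and $ACD$, which share the diagonal $e=AC$, yields $a^2+b^2-e^2=2ab\cos\angle CBA$ and $c^2+d^2-e^2=2cd\cos\angle CDA$, so that \eqref{eqn:K} rewrites as
\[
K \;=\; -2abcd\,\bigl[\cos\angle CBA + \cos\angle CDA\bigr].
\]
Since $a,b,c,d>0$, the vanishing of $K$ is equivalent to $\cos\angle CBA+\cos\angle CDA=0$, which, as both angles lie in $[0,\pi]$, means $\angle CBA+\angle CDA=\pi$.

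For the non-degenerate convex-hull classification I would adopt the coordinates $A=(0,0)$, $B=(a,0)$, $C=(u,v)$, $D=(w,z)$ with the polynomials $f_1,\ldots,f_5$ used in the Converse of Ptolemy proof. In these coordinates $\Delta_{ABC}=av$ and $\Delta_{ACD}=uz-vw$, so $N=\Delta_{ABC}\Delta_{ACD}=av(uz-vw)$. Eliminating $u,v,w,z$ from the ideal $\langle f_1,\ldots,f_5,R\rangle$ via a reduced Gr\"obner basis should produce $N$ as a rational function of $a,b,c,d,e,f$. By analogy with the Ptolemy case, where the numerator factored into a product of triangle-inequality terms making $N\ge 0$, I expect the corresponding factorization here to carry an overall minus sign, so that $N<0$ whenever the configuration is non-degenerate. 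Once $N<0$ is established, Theorem~\ref{thm:hulls} excludes precisely the rows with $N>0$ — the convex-sequential hull $ABCD$ and the concave hulls $ABD$ and $BCD$ — together with the non-realizable row, leaving exactly $ABC$, $CAD$, $ABDC$, $ADBC$ when $\Delta_{ABC}>0$; the $\Delta_{ABC}<0$ case is handled by the mirror analysis.

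For the degenerate cases, the vanishing of some $\Delta$ means three of the points are collinear. If all four $\Delta$'s vanish we are in case~1. Otherwise, by the natural $B\leftrightarrow D$ symmetry of $R$ it suffices to treat $\Delta_{ABD}=0$, i.e.\ $z=0$ in our coordinates. Substituting $z=0$ into $\langle f_1,\ldots,f_5,R\rangle$ and reducing — or equivalently applying a radical-membership test in the spirit of Proposition~\ref{prop:RadicalMembership} — should force $b=c$, producing case~2 (Figure~\ref{fig:degenerate-case-1}); the parallel computation with $\Delta_{BCD}=0$ yields $a=d$ and case~3 (Figure~\ref{fig:degenerate-case-2}). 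The main obstacle is the sign analysis in the second stage: producing the rational expression for $N$ modulo $\langle f_1,\ldots,f_5,R\rangle$ is mechanical, but exhibiting a factorization with manifestly definite sign — the analogue of the triangle-inequality product appearing in the Ptolemy case — is where the real algebraic work lies; once that factorization is in hand, the classification reduces to a direct appeal to Theorem~\ref{thm:hulls}.
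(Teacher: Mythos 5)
Your proposal follows essentially the same route as the paper: planarity plus identity \eqref{eqn:identity3} gives $K=0$, the law of cosines turns $K$ into $-2abcd(\cos\angle CBA+\cos\angle CDA)$, and the hull classification comes from eliminating the coordinate variables from $\langle f_1,\ldots,f_5,R, N-t\rangle$ to show $N\le 0$ (the paper's numerator is indeed the negative of the Ptolemy one, $-abcd(-a+b+c+d)(a-b+c+d)(a+b-c+d)(a+b+c-d)/(4(ab+cd)^2)$ up to the relabelling of sides, so your sign expectation and the subsequent reading of the tables in Theorem~\ref{thm:hulls} are correct). The paper likewise only uses $N$, noting explicitly that the analogous expression for $M$ can change sign.

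Two points in your degenerate-case analysis need repair. First, the symmetry you invoke is the wrong one: swapping $B\leftrightarrow D$ sends $a\mapsto d$, $b\mapsto c$ and fixes $R$, but it maps $\Delta_{ABD}$ to $\pm\Delta_{ABD}$ and $\Delta_{BCD}$ to $\pm\Delta_{BCD}$, so it does \emph{not} reduce the case $\Delta_{BCD}=0$ to the case $\Delta_{ABD}=0$. The symmetry that does this is $A\leftrightarrow C$ (which sends $a\leftrightarrow b$, $c\leftrightarrow d$, fixes $e,f$ and hence $R$, and exchanges $\Delta_{ABD}=0$ with $\Delta_{BCD}=0$ and $b=c$ with $a=d$); alternatively, do the second elimination directly as the paper does. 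Second, your enumeration skips the possibility that only $\Delta_{ABC}$ or $\Delta_{ACD}$ vanishes while the other $\Delta$'s do not; this configuration is not among the three listed degenerate cases, so it must be excluded. The paper gets this from the factorized expression for $N=\Delta_{ABC}\Delta_{ACD}$: that product vanishes only when one of the triangle-inequality factors vanishes, i.e.\ only when all four points are collinear, so neither factor can vanish in isolation. Your expected factorization of $N$ yields the same conclusion, but you need to state and use it.
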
 
\begin{proof}
   Assume $ R = 0 $.  Since the points are on the plane we also have that $ CM = 0 $. Hence equation \eqref{eqn:identity3} gives  that $ K = 0 $.
The equation $ K = 0 $ can also be written as 
\[
    ab \left[ e ^2 - (c ^2 + d ^2) \right] = - cd \left[ e ^2 - (a ^2 + b ^2) \right].   
\]
Applying the law of cosines to the triangles $ ACD $  and $ ABC$ we have $ e ^2 = c ^2 + d ^2 - 2 cd \cos \alpha $ and $ e ^2 = a ^2 + b ^2 + 2ab \cos \beta $, where $\measuredangle CDA = \alpha $ and $ \measuredangle CBA = \beta $. Hence we obtain the equations 
\[ ab [ e ^2 - (c ^2 + d ^2)  ] = - 2 abcd \cos \alpha, \quad cd[ e ^2 - (a ^2 + b ^2) ] = - 2abcd \cos \beta . \]
Hence, $ K = 0 $ implies that $ \cos \alpha = - \cos \beta $, which can be also written as 
\[
    \cos \alpha + \cos \beta = 2 \cos \left( \frac{ \alpha + \beta } { 2 } \right) \cos \left( \frac{ \alpha - \beta } { 2 } \right)=0,
\]
which means that $  \alpha + \beta = 2\pi n - \pi $ or $\alpha - \beta  = 2 \pi m - \pi $. Since $ \alpha , \beta \leq \pi $ the only possible solution  is $ \alpha  + \beta = \pi $.  This proves that the angles  $\angle CDA  $ and $\angle CBA  $ are supplementary.

Choose a  coordinate system such that $ A=(u,v) $, $ B = (f,0) $, $ C = (w, z) $ and  $ D = (0,0) $. Then we have the following polynomials
\begin{align*} 
h _1 & = (u - f) ^2 + v ^2 - a ^2 \\
h _2 & = (u - w) ^2 + (z - v) ^2 - e ^2\\
h _3 & = w ^2 + z ^2 - c ^2 \\
h _4 & = u ^2 + v ^2 - d ^2 \\
h _5 & = (w - f) ^2 + z ^2 - b ^2 \\ 
\end{align*} 
We show that $ N \leq 0 $. Consider the ideal $ J_N=<h _1, h _2, h _3 , h _4, h _5 , R, N - t>$, where $ t $ is a slack variable. 
Calculating the elimination ideal of $ J $  with Maple,  eliminating the variables $ e,f, u,v, w, z $ yields a polynomial $ L $  in $ a,b,c,d $ and $ t $. Solving  $ L = 0 $ for $t$ yields
\[
    t = -\frac{ (-d+a+c+b)(d-a+c+b)(d+a-c+b)(d+a+c-b)abcd } { 4(ab+cd)^2}.
\]
From this expression for $ t $ it follows that $ N = t \leq 0 $, since in a quadrilateral  $ -d+a+c+b \geq 0 $, $ d-a+c+b \geq 0 $, etc. (which follows from the  triangle inequality).
Performing a similar computation it is possible to find an expression for $ M $, but it turns out that $ M $ can also take positive values, and so the computation is not useful for this proof. 

We now study the degenerate cases where three or more points lie on the same line. 
Note that the equality   $ -d+a+c+b =0 $, or one of the other similar ones, means that $ A,B, C $ and $ D $ are collinear. As a consequence,  $N=t=0$  if and only if $ A,B,C $ and $ D $ are collinear. This means that  the triangles $ ACD $ and $ ABC$  have nonzero areas, unless the quadrilateral degenerates to a line. Therefore, the condition $ R = 0 $ implies that  $ A,  C $ and  $  D  $ cannot be collinear, and $ A , B $ and $ C $ cannot be collinear, unless $ A,B , C $ and $ D $ are all collinear. 

%-------------------------------------------------
\begin{figure}[t]
\begin{center}
\includegraphics[scale=0.4]{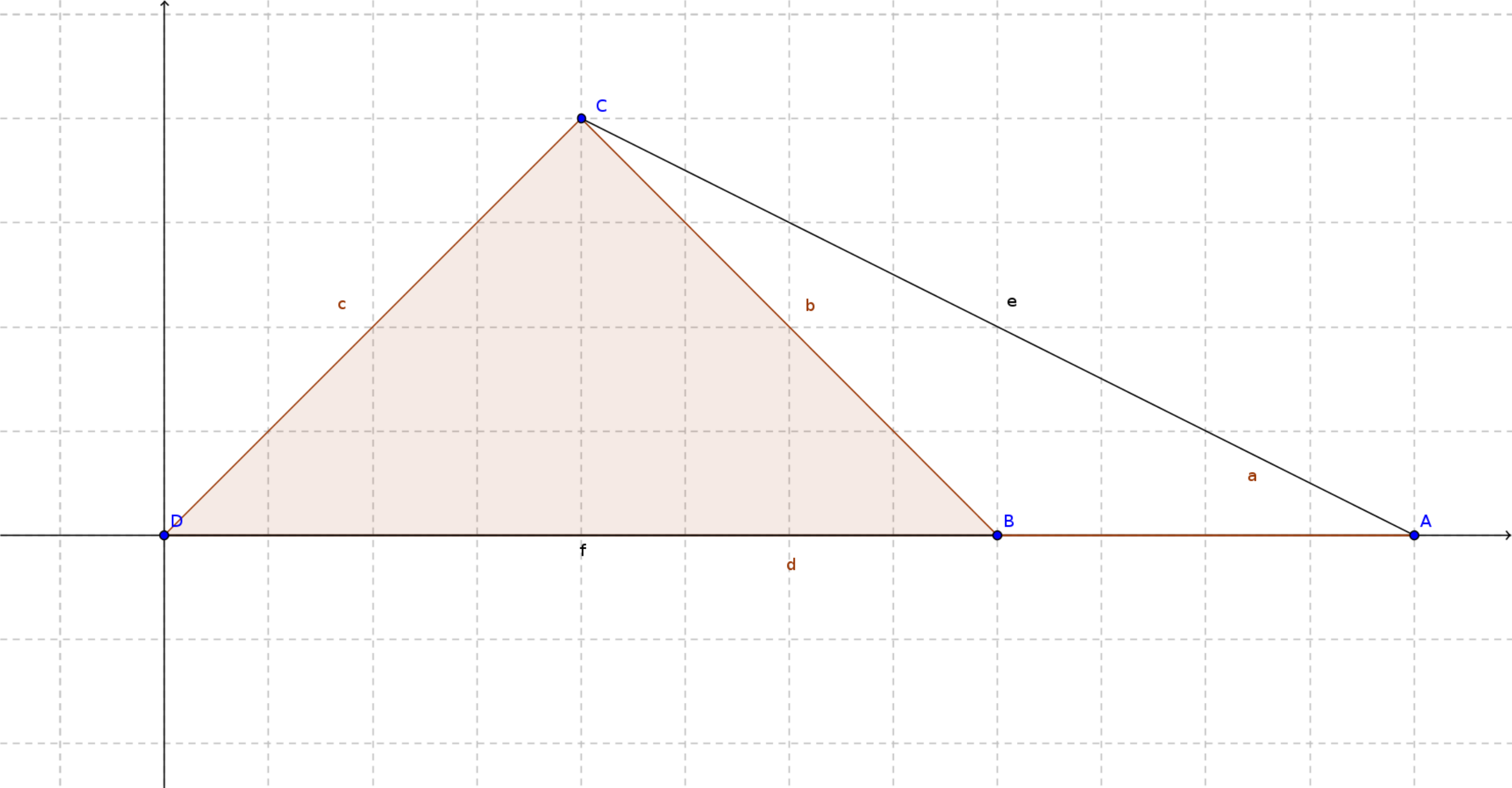}
\caption{Degenerate configuration with $ f = 2 w $\label{fig:degenerate-case-1}} 
\end{center}
\end{figure}
%-----------------------------------------

Consider the  ideal $ J_{ABD}=<h _1, h _2, h _3 , h _4, h _5 , R, 
\Delta _{ ABD}- s>$, where $ s $ is a slack variable. Computing the elimination ideal and solving for $ s $ yields 
%\[
%   s= \frac{\Gamma (-d+a+c+b)(d-a+c+b)(d+a-c+b)(d+a+c-b)}{4(ac+bd)^2(ab+cd)^2}
%\]
\[
    s=\pm\frac{ \sqrt{(-d+a+c+b)(d-a+c+b)(d+a-c+b)(d+a+c-b)}(b+c)(b-c)da}{2(ab+cd)(ac+bd)}
\]
% where $\Gamma = abcd(b-c)(b+c)(a-d)(a+d) $.
Consequently $ \Delta _{ ABD } = s = 0 $  when either $ A, B , C  $ and $ D $ are collinear, or $ b = c $.
In the latter case, $ h _3 = h _5 = 0 $ yield $ f (f - 2 w) = 0$    which, since we assume $ f>0 $,  gives $ f = 2w $, which means that the triangle $ BCD $ is isosceles, see Figure \ref{fig:degenerate-case-1}.
%-------------------------------------------------
\begin{figure}[t]
\begin{center}
\includegraphics[scale=0.4]{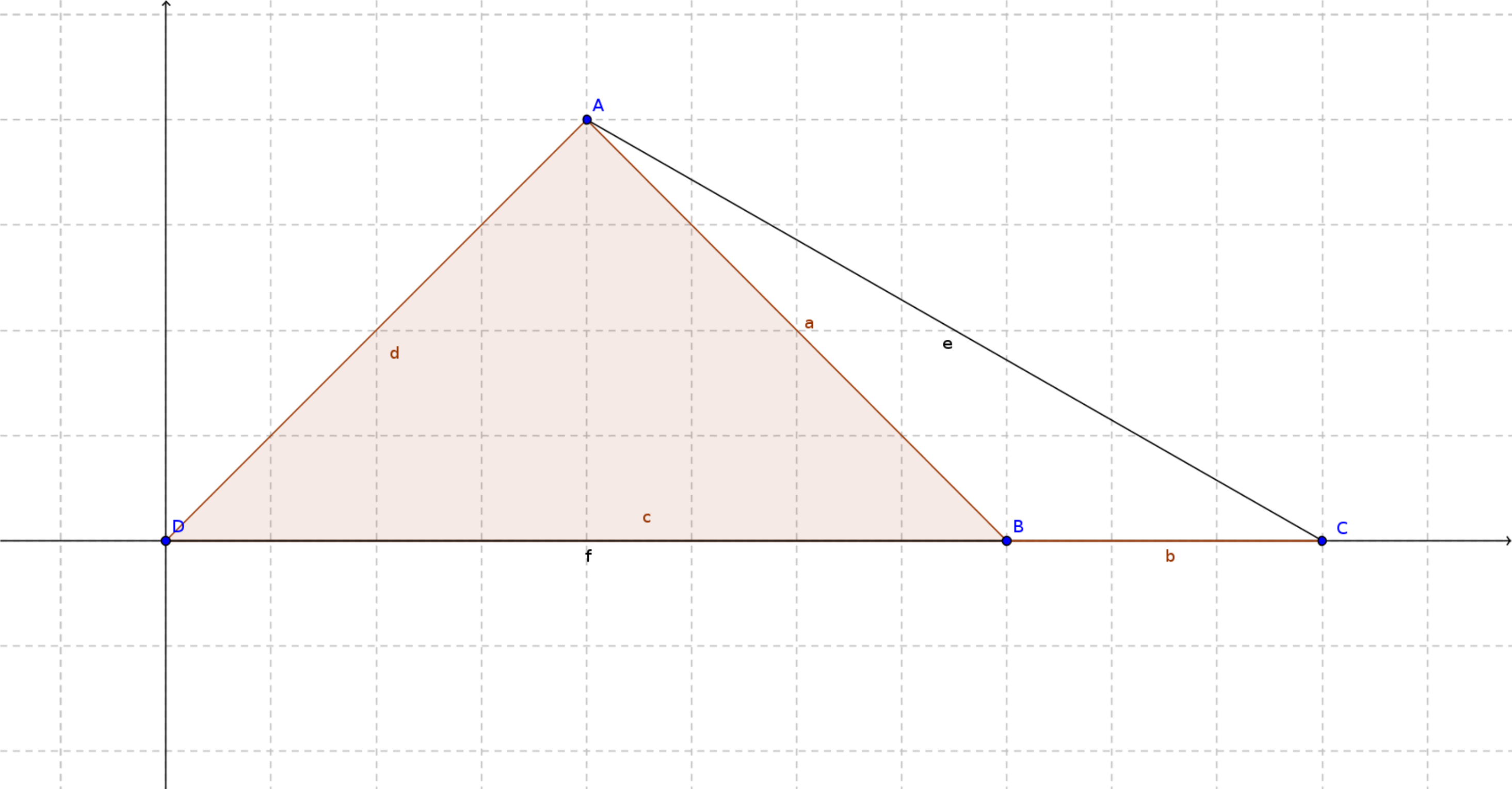}
\caption{Degenerate configuration with $ f = 2 u $\label{fig:degenerate-case-2}} 
\end{center}
\end{figure}
%-----------------------------------------

Finally, consider the ideal  $ J_{BCD}=<h _1, h _2, h _3 , h _4, h _5 , R, 
\Delta _{ BCD}- \eta>$, where $ \eta $ is a slack variable. Computing the elimination ideal and solving for $ \eta $ yields 
\[
    \eta =\pm\frac{  \sqrt{(-d+a+c+b)(d-a+c+b)(d+a-c+b)(d+a+c-b)}(a+d)(a-d)cb}{2(ab+cd)(ac+bd)}   
\]
Consequently, $ \Delta _{ BCD } = \eta = 0 $  when either $ A, B , C  $ and $ D $ are collinear, or $ a = d $.
In the latter case, $ h _1 = h _4 = 0 $ yield $ f (f - 2 u) = 0$    which, since we assume $ f>0 $,  gives $ f = 2u $,  which means that the triangle $ ABD $ is isosceles, see Figure \ref{fig:degenerate-case-2}.

Now that we have studied the degenerate cases, we can go back to the non-degenerate ones and conclude the proof.   Assuming that all the $ \Delta $'s are non-zero, we can apply  Theorem \ref{thm:hulls}.
Since $N \geq 0 $, if $ \Delta _{ ABC }  >0 $, then  the convex hull  of the points $ A, B , C, $ and $ D $ is one of  $ABC$, $CAD$, $ABDC$, and $ ADBC $.  If $ \Delta _{ ABC }  <0 $, then  the convex hull  of the points $ A, B , C, $ and $ D $ is one of  $ACB$, $CDA$, $ACDB$, and $ ACBD $. 
This concludes the proof.

\end{proof} 
%-------------------------------------------------
\begin{figure}[t]
\begin{center}
\includegraphics[scale=0.4]{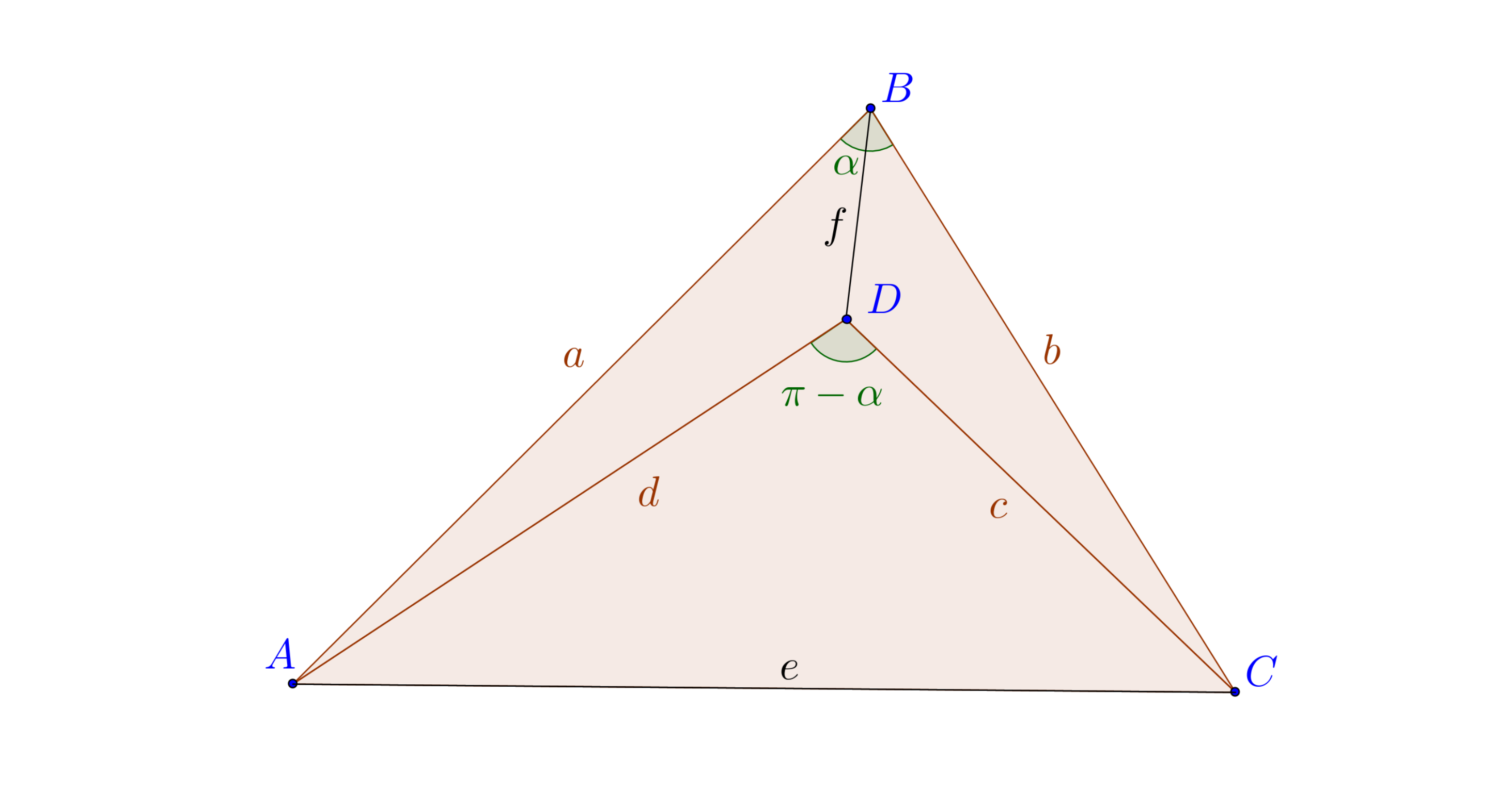}
\caption{Concave quadrilateral $ADCB$  with supplementary angles $\angle ADC $ and $ \angle ABC$. The orange shaded area represents the convex hull $ACB$. \label{fig:supplementary-concave}}
\end{center}
\end{figure}
%-----------------------------------------

\begin{remark}[Geometrical Interpretation]
   Note that  the polynomial $ K $, can be interpreted as follows.
We consider the cyclic quadrilateral in Figure \ref{fig:cyclic}.
Since the quadrilateral is cyclic, then  $\measuredangle CDA=\pi-\measuredangle CBA = \alpha $. Hence,   by the law of cosines we have
$ e^2 
= c ^2 + d ^2 - 2 cd \cos \alpha  $ and $ e ^2 = a ^2 + b ^2 + 2 ab \cos \alpha $. Taking a suitable linear combination of these equations  yields the expression
\[
K = e ^2 (ab + cd ) - ab (c ^2 + d ^2) - cd (a ^2 + b ^2)=0.
\]

The exact same computation holds for any of the configurations in Theorem \ref{thm:R=0}. In particular, the computation holds for the cases represented in  Figures \ref{fig:supplementary},\ref{fig:degenerate-case-1},\ref{fig:degenerate-case-2}, and \ref{fig:supplementary-concave}. 
Hence, in all cases,  the expression  $ K = 0 $ is just a linear combination of the cosine law applied to two triangles. 
However, by  Theorem \ref{thm:R=0} we know that in the cyclic case $ P = 0 $, while in the other  cases $ R = 0 $, in which case, whenever the configuration is not cyclic,   $ P >0 $ by the well known Ptolemy's inequality.

Note that, the equation   $ R = 0 $ expresses the Generalized Quadrilateral Theorem (Theorem \ref{thm:generalized-quadrilateral})
for  the configurations mentioned in Theorem \ref{thm:R=0}, and thus, gives the distance between the midpoints of the segments $ AC $ and $ BD $  as 
\[v = \frac{1}{2} \sqrt{ \frac{ (bc + ad) ^2 } { e ^2 }},\]
for those configurations. 
As a consequence, this formula is applicable to specific quadrilaterals having supplementary angles, but is not suitable for cyclic quadrilaterals, as $R \neq 0$ in such cases.  For instance, this formula is valid for both the configuration in Figure  \ref{fig:supplementary} and the concave configuration in  Figure \ref{fig:supplementary-concave}.
 
%All the configurations mentioned in Theorem \ref{thm:R=0} --consisting of certain  quadrilaterals with supplementary angles-- are covered by this formula, but cyclic quadrilaterals are excluded because in this case, $ R \neq 0 $. This formula applies, for instance, to both the convex configuration in Figure  \ref{fig:supplementary} and the concave configuration in  Figure \ref{fig:supplementary-concave}.
%This formula applies to  all the configurations described in Theorem \ref{thm:R=0}, which includes some quadrilaterals  with supplementary angles, but not cyclic quadrilaterals, since $ R \neq 0 $ in this case.  

For planar configurations, if $ R = 0 $ then we also have that $ K =  0 $.  Solving $ K = 0 $ for $ e ^2 $ and substituting the result into $ v $ yields another formula for $ v $ which applies to the configurations specified in Theorem \ref{thm:R=0}  and is independent of the diagonals
\[
    v = \frac{1}{2 } \sqrt{ \frac{(b c + a d) (a b + c d)}  { (ac + bd) }   }.
\]
\end{remark} 

\section{Polynomials  Conditions for Tilted Kites}\label{sec:equalangles}
%%%%%%%%%%%%%%%%%%%%%%%%%%%%%%%%%%%%%%%%%%
In the previous section we recognized that the polynomial $ K $ can be interpreted as an expression coming from the law of cosines. There are a few more interesting polynomials that can be obtained that way and are associated to various configurations of four points. %Recall the following definition 
%\begin{definition}
%   A (convex) tilted kite is a convex quadrilateral with two opposite equal angles.  
%\end{definition} 
Recall that tilted kites are quadrilaterals with two opposite equal angles, and they can be convex (see Figure  \ref{fig:equal-angles}) or concave (see Figure \ref{fig:concave-tilted}). 
Convex  tilted kites are  studied in detail in  \cite{josefsson2018properties}. 
%-------------------------------------------------
\begin{figure}[t]
\begin{center}
\includegraphics[scale=0.4]{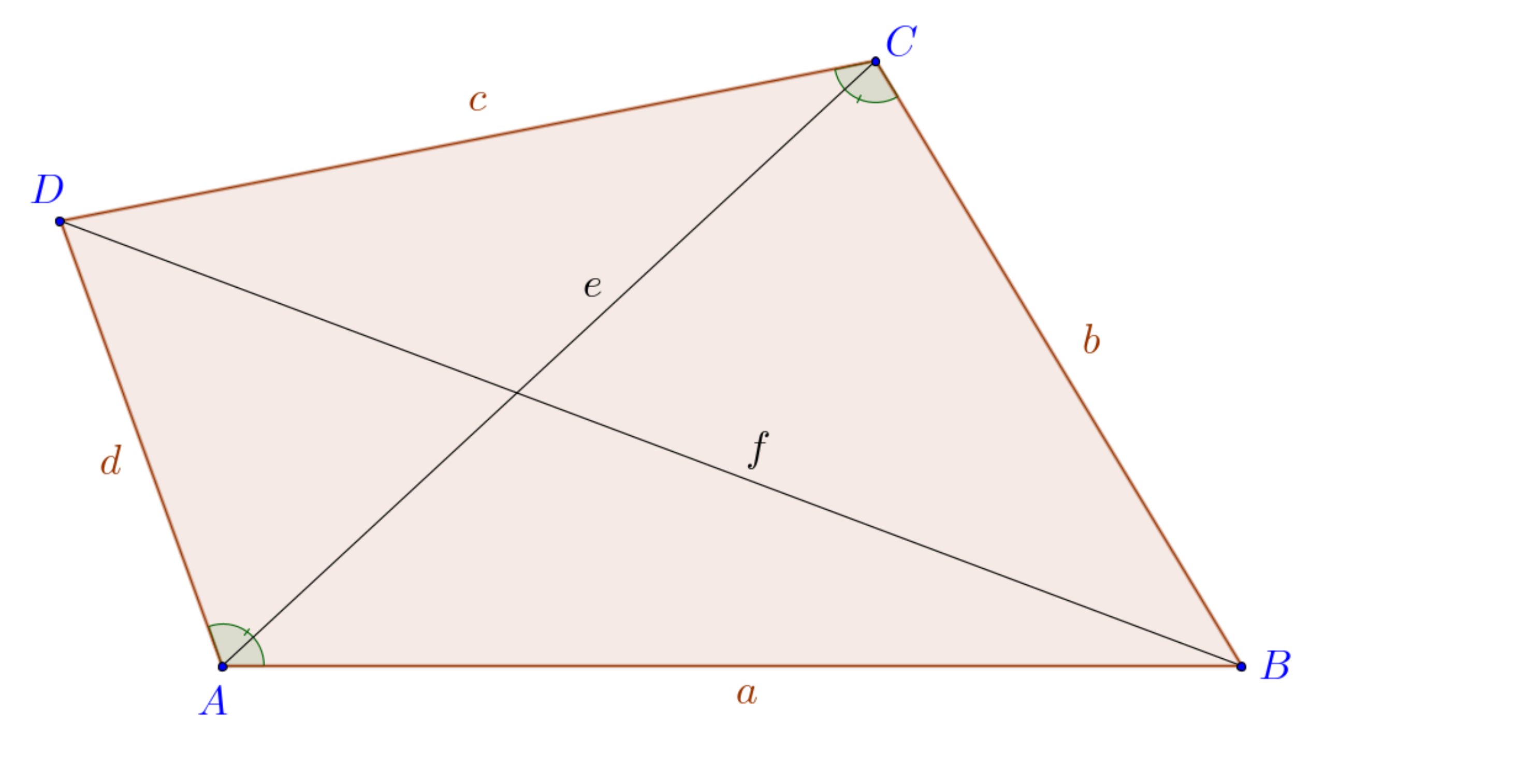}
\caption{Tilted kite with equal angles $\angle BAD $ and $ \angle BCD$.\label{fig:tilted}}
\end{center}
\end{figure}
%-----------------------------------------
\begin{figure}[t]
\begin{center}
\includegraphics[scale=0.5]{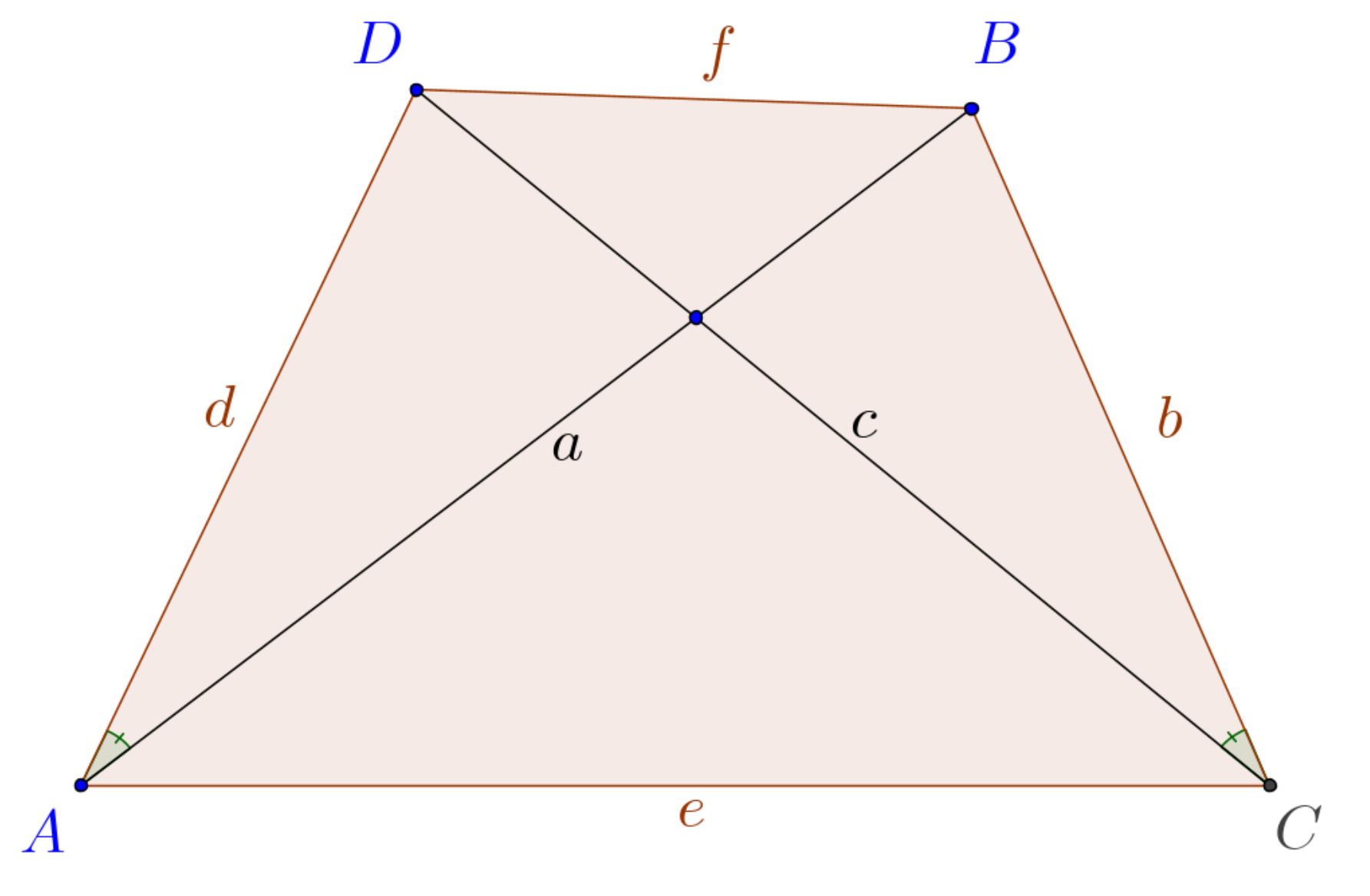}
\caption{Convex quadrilateral $ACBD$ with equal angles $\angle BCD $ and $ \angle BAD$.\label{fig:equal-angles}}
\end{center}
\end{figure}
%------------------------------------
Consider the  convex tilted kite in Figure \ref{fig:tilted}, the concave tilted kite in Figure \ref{fig:concave-tilted}, and  the quadrilateral \ref{fig:equal-angles}.  Suppose $\measuredangle BAD=\measuredangle BCD = \alpha $. Then,  it follows from  the law of cosines that
$ f^2 
= a ^2 + d ^2 - 2 ad \cos \alpha  $ and $ f ^2 = b ^2 + c ^2 - 2 bc \cos \alpha $. Taking a linear combination of these  yields the expression
\[
K_T = f ^2 (bc - ad ) - bc (a ^2 + d ^2) + ad (b ^2 + c ^2)=0, \label{eqn:KT}
\]
which coincides with the equation in the proof of Theorem 4.2 in \cite{josefsson2018properties}.
Note that the same expression  holds in several cases  including but not limited to  the configurations in Figures \ref{fig:tilted},  \ref{fig:equal-angles},  and  \ref{fig:concave-tilted}.

We also find the polynomials 
\begin{align}
  P_T& = (ac-bd+ef)\label{eqn:PT}\\
  Q _T & = (ac-bd-ef)\label{eqn:QT}\\
  R _T %& = (a^2b^2-a^2f^2-2abcd-b^2f^2+c^2d^2-c^2f^2-d^2f^2+e^2f^2+f^4)\\
       & = (ab-cd)^2 - f ^2 (a ^2 + b ^2 + c ^2 + d ^2 - e ^2 - f ^2) \label{eqn:RT}  
\end{align} 
which are related by the equation
\begin{equation}\label{eqn:identity-T}
    f^2CM   = 2(P _T Q _T R _T - K _T ^2). 
\end{equation}
Moreover, in analogy with the expressions for $ S $  and $ W $,   we also  find the  following polynomials
\begin{align} S_T  = &  f ( b c - a d) - e (c d - a b)   \\
    W _T  = &  -a c (-a ^2 + b ^2 - c ^2 + d ^2 + e ^2 + f ^2) \\
         & - b d (a ^2 - b ^2 + c ^2 - d ^2 + e ^2 + f ^2) + e f ( a ^2 + b ^2 + c ^2 + d ^2 - e ^2 - f ^2) . 
\end{align} 
  Clearly, $ S _T $ is related to the other polynomials by the following equations
\begin{align} 
   - f S _T + K _T -(cd - a b)  P _T & =0\\
   - P _T W _T + S _T ^2+CM/2  & = 0   
   \end{align}

%\subsection{Concave Angle quadrilateral/Tilted Kite}
%This is the concave version of the angle quadrilateral/tilted kite. 
%-------------------------------------------------------------
\begin{figure}[t]
\begin{center}
\includegraphics[scale=0.5]{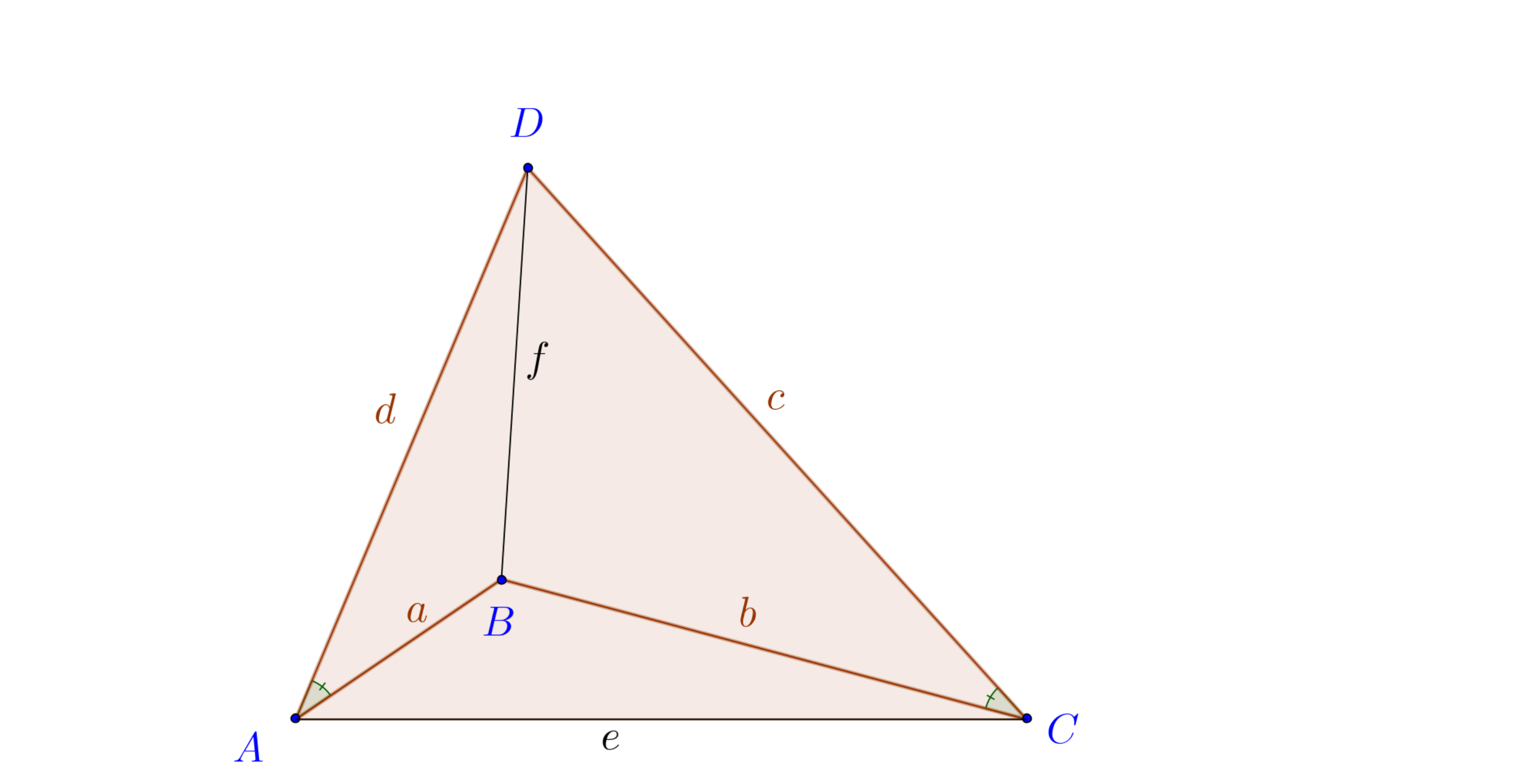}
\caption{Concave tilted kite  $ABCD$ with equal angles $\angle BAD $ and $ \angle BCD$. The orange shaded area denotes the convex hull $ACD$.\label{fig:concave-tilted}}
\end{center}
\end{figure}
%---------------------------------------------------------

% Suppose $\measuredangle BAD=\measuredangle BCD = \alpha $ then by the law of cosines we have
%$ f^2 = a ^2 + d ^2 - 2 ad \cos \alpha  $ and $ f ^2 = b ^2 + c ^2 - 2 bc \cos \alpha $. Taking a linear combination of these  yields the expression
%\[
%K_T = f ^2 (bc - ad ) - bc (a ^2 + d ^2) + ad (b ^2 + c ^2)=0, 
%\]
%that we found before for convex tilted kites. So it seems that concave tilted kites are described by the same polynomials.
%In this case using GeoGebra we verify that $ R _T = K _T = 0 $
%\section{Convexity in the case of equal angles}

Suppose that we have  four point $ A, B , C  $ and $ D $  lying in the plane. Then, $ CM = 0 $, and it is clear from equation \eqref{eqn:identity-T} that
we can consider the following three cases
%by equation \eqref{eqn:identity-T} we  have that  either 
%$ P _T = 0 $, or $ Q _T = 0 $, or $ R _T = 0 $. 
\begin{enumerate} 
    \item $ Q _T = 0 $: 
 Clearly, by equation  \eqref{eqn:identity-T},  we must have $ K _T = 0 $. Moreover, by the Converse of Ptolemy's Theorem  the four points form a convex cyclic quadrilateral  with counterclockwise order $ ACBD $  (see Figure \ref{fig:equal-angles}) or $ADBC$ , or a straight line.
 Notice that in  Figure \ref{fig:equal-angles} the  angle $\angle DAB$ between the  side  $ d $ and the  diagonal  $ AB$ is equal to the angle between the opposite side and the other diagonal. Hence, by elementary geometry, we have that  $ ABCD $ is  a convex cyclic quadrilateral with diagonals $ a $ and $ c $.
 \item    $ P _T = 0 $: Again, we must have $ K _T = 0 $, but now   the four points  form  a convex cyclic quadrilateral  with counterclockwise order $ ACDB $   or $ABDC$  or a straight line. 
 \item $ R _T = 0 $: As in the previous two cases we must have $ K _T = 0 $.  Looking at Figures \ref{fig:tilted},  and  \ref{fig:concave-tilted}, it is natural to conjecture that  $ R _T = 0 $ is    a necessary condition for a quadrilateral to be a tilted kite. This conjecture is made precise in the following theorem.

 \end{enumerate}
 %Observe that in each of these cases,  by  equation \eqref{eqn:identity-T},  we must have $ K _T = 0 $. 

\begin{theorem}\label{thm:RT=0}
     Let $ A, B , C $ and $ D $ be four points in the plane that satisfy  $ R_T = 0 $ then the angles  $\angle BAD  $ and $\angle BCD  $ are equal.  
     Moreover, if  all the $ \Delta  $'s are nonzero then 
     \begin{enumerate}
         \item If  $ \Delta _{ ABC } <0 $ the convex hull  of the points $ A, B , C, $ and $ D $ is one of $ ADCB $,  $ACB$, $ CDA $. 
         \item If  $ \Delta _{ ABC }  >0 $, then  the convex hull is one of  $ABCD$, $ABC$, and $ CAD $. 
     \end{enumerate} 
    In particular, if  $ ad  - bc = 0 $, then the tilted kite is  a parallelogram or a rhombus. 
    Finally, If at least one of the $ \Delta  $'s is zero then we have the following degenerate cases
     \begin{enumerate}
         \item The points $ A,B,C $ and $ D $ are collinear.  
         \item The points $ A, B  $ and $ c  $ are collinear and $ c = d $. %see Figure \ref{fig:degenerate-case-1}. 
         \item The points $ A , C $ and $ D $ are collinear and $ a = b $ . %see Figure \ref{fig:degenerate-case-2}.   
     \end{enumerate}  
 \end{theorem}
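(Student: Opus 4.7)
The strategy mirrors the proof of Theorem \ref{thm:R=0}: first, use the planarity identity \eqref{eqn:identity-T} to reduce $R_T = 0$ to $K_T = 0$; then interpret $K_T = 0$ via the law of cosines; and finally run a Gröbner basis elimination to pin down the sign of $M = \Delta_{ABD}\Delta_{BCD}$.

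Since the four points are coplanar, $CM = 0$, so substituting $R_T = 0$ into \eqref{eqn:identity-T} forces $K_T^2 = 0$ and hence $K_T = 0$. Regrouping,
\[
K_T = bc\bigl(f^2 - a^2 - d^2\bigr) - ad\bigl(f^2 - b^2 - c^2\bigr),
\]
and applying the law of cosines in the triangles $ABD$ and $BCD$, with $\alpha = \angle BAD$ and $\beta = \angle BCD$, gives $f^2 - a^2 - d^2 = -2ad\cos\alpha$ and $f^2 - b^2 - c^2 = -2bc\cos\beta$. Substitution yields $K_T = 2abcd\,(\cos\beta - \cos\alpha)$, and since $a,b,c,d > 0$ this forces $\cos\alpha = \cos\beta$, whence $\alpha = \beta$ in $[0,\pi]$.

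For the convex hull classification I would adopt the same coordinates as in the proof of Theorem \ref{thm:R=0}, namely $A = (u,v)$, $B = (f,0)$, $C = (w,z)$, $D = (0,0)$, together with the five distance polynomials $h_1,\ldots,h_5$. Form the ideal $\langle h_1,\ldots,h_5, R_T, M - t\rangle$ and eliminate $e, f, u, v, w, z$ via a lexicographic Gröbner basis; the expected output is
\[
t = \frac{abcd\,(-a+b+c+d)(a-b+c+d)(a+b-c+d)(a+b+c-d)}{4(ad+bc)^2},
\]
or a closely related expression with a manifestly positive denominator. By the triangle inequality applied to each of the four triangles formed by three of the points, every factor in the numerator is non-negative, so $M = t \geq 0$. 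Going through the table of Theorem \ref{thm:hulls} and keeping only rows with $M \geq 0$ and all $\Delta$'s nonzero leaves exactly the three convex hulls listed for each sign of $\Delta_{ABC}$.

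The degenerate cases are treated by analogous eliminations on the ideals $\langle h_1,\ldots,h_5, R_T, \Delta_{ABC} - s\rangle$ and $\langle h_1,\ldots,h_5, R_T, \Delta_{ACD} - \eta\rangle$. I expect $s$ and $\eta$ to come out proportional to the square root of the Heron product times $(c-d)(c+d)$ and $(a-b)(a+b)$ respectively, so that $\Delta_{ABC} = 0$ forces either collinearity of all four points or $c = d$, and similarly $\Delta_{ACD} = 0$ forces collinearity or $a = b$. In the $c = d$ subcase the equations $h_3 = h_5 = 0$ reproduce the expected isosceles-triangle picture (and analogously for $a = b$). The parallelogram/rhombus remark then follows by setting the coefficient $bc - ad$ of $f^2$ in $K_T$ to zero: the equation $K_T = 0$ reduces to $bc(a^2 + d^2) = ad(b^2 + c^2)$, which combined with $ad = bc$ yields $a^2 + d^2 = b^2 + c^2$ and hence $\{a,d\} = \{b,c\}$, giving the claimed specialization. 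The principal technical step will be verifying, in Maple, that the eliminated polynomials factor cleanly into the Heron-type form above so that positivity of $M$ can be read off from the triangle inequality; since this is structurally the same computation as in the proof of Theorem \ref{thm:R=0}, I anticipate no new difficulty.
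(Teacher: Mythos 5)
There is a genuine gap in your treatment of the convex--hull classification. You predict that eliminating down to $a,b,c,d,t$ from $\langle g_1,\dots,g_5,R_T,M-t\rangle$ yields a numerator of Heron type, $abcd(-a+b+c+d)(a-b+c+d)(a+b-c+d)(a+b+c-d)$, whose factors are all nonnegative by the triangle inequality. That is what happens for $P$ and for $R$, but not here: the actual elimination gives
\[
M=t=-\frac{abcd\,\Gamma}{4(ad-bc)^2},\qquad \Gamma=(d+a+c+b)(d+a-c-b)(-d+a+c-b)(-d+a-c+b),
\]
and $\Gamma=\bigl((a+d)^2-(b+c)^2\bigr)\bigl((a-d)^2-(b-c)^2\bigr)$ contains factors such as $(a+d)-(b+c)$ whose signs are \emph{not} controlled by the triangle inequality. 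So positivity of $M$ cannot be ``read off'' as you propose; the sign of $-\Gamma$ is a consequence of realizability of the configuration, not of side-length inequalities. The paper closes this by also eliminating for $\Delta_{ABD}$, obtaining $\Delta_{ABD}=\pm\sqrt{-\Gamma}\,da/(2(ad-bc))$: since $\Delta_{ABD}$ is real, one must have $-\Gamma\ge 0$ whenever $ad-bc\neq 0$, and only then does $M\ge 0$ follow. (Note also that the denominator is $(ad-bc)^2$, not $(ad+bc)^2$, so the case $ad=bc$ must be excluded from this argument and handled separately, which the paper does with a further Gr\"obner computation.) Your degenerate-case analysis has the same issue in miniature -- the radicand is $-\Gamma$, not a Heron product -- but the structure of the conclusion ($\Delta_{ABC}=0$ forces collinearity or $c=d$, etc.) is correct.

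Two smaller points. The paper uses coordinates adapted to the diagonal $AC$ (namely $A=(0,0)$, $B=(u,v)$, $C=(e,0)$, $D=(w,z)$ with polynomials $g_1,\dots,g_5$), not the $h_i$ from Theorem~\ref{thm:R=0}; this is immaterial for $M$ but you should set up the ideal consistently. For the $ad=bc$ case your elementary derivation of $\{a,d\}=\{b,c\}$ from $K_T=0$ is a nice alternative to the paper's radical-membership computation, but it is not quite finished: equal opposite (or adjacent) sides alone do not make four arbitrary planar points a parallelogram. You still need to observe that under $a=b,\ c=d$ (or $a=c,\ b=d$) the hypothesis $R_T=0$ collapses to $f^2(2a^2+2b^2-e^2-f^2)=0$, i.e.\ the parallelogram law, which is what actually yields the parallelogram/rhombus conclusion. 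The first part of your proof (reduction to $K_T=0$ via the identity \eqref{eqn:identity-T} and the law-of-cosines argument for $\angle BAD=\angle BCD$) is correct and identical to the paper's.
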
 
%\begin{theorem}
%If $ R_T = 0 $  and all the $ \Delta's $ are nonzero,   then the configuration is either a convex tilted kite with convex hull $ ABCD$ or $ ADBC $ or a concave tilted kite with convex hull $ ABC, CAD, ACB $ or $ CBA $.
%In particular, if  $ ad  - bc = 0 $, then the tilted kite is actually a parallelogram. \end{theorem} 
\begin{proof}

  Assume $ R_T = 0 $.  Since the configuration is planar $ CM = 0 $, and thus,  by equation \eqref{eqn:identity-T}, we also have that   $ K_T = 0 $.
  The equation $ K_T = 0 $ can also be written as 
\[
    bc \left[ f ^2 - (a ^2 + d ^2) \right] = ad \left[ f ^2 - (b ^2 + c ^2) \right].   
\]
Applying the law of cosines we have $ f ^2 = a ^2 + d ^2 - 2 ad \cos \alpha $ and $ f ^2 = b ^2 + c ^2 - 2 bc\cos \beta $, where $\measuredangle BAD = \alpha $ and $ \measuredangle BCD = \beta $. Hence we obtain the equations 
\[ bc \left[ f ^2 - (a ^2 + d ^2) \right] = - 2 abcd \cos \alpha, \quad  ad \left[ f ^2 - (b ^2 + c ^2) \right]= - 2abcd \cos \beta . \]
Hence, $ K_T = 0 $ implies that $ \cos \alpha =  \cos \beta $, which can be also written as 
\[
    \cos \alpha - \cos \beta =- 2 \sin \left( \frac{ \alpha + \beta } { 2 } \right) \sin \left( \frac{ \alpha - \beta } { 2 } \right)=0,
\]
which means that $  \alpha + \beta = 2\pi n  $ or $\alpha - \beta  = 2 \pi m  $ with $ m, n = 0 , \pm 1 , \pm 2, \ldots $. Since $ \alpha , \beta \leq \pi $ the only possible solution  is $ \alpha  = \beta  $.  This proves that 
the angles  $\angle BAD  $ and $\angle BCD  $ are equal.

To complete the proof choose a  coordinate system such that $ A=(0,0) $, $ B = (u,v) $, $ C = (e, 0) $ and  $ D = (w,z) $. Then we have the following polynomials
\begin{align*} 
g _1 & = u^2 + v ^2 - a ^2 \\
g _2 & = (e-u) ^2 + v ^2 - b ^2\\
g _3 & = (w -e)^2 + z ^2 - c ^2 \\
g _4 & = w ^2 + z ^2 - d ^2 \\
g _5 & = (u-w)^2+(z-v)^2-f^2
\end{align*} 
We show that $ M >0 $. Consider the ideal $ J_M=<h _1, h _2, h _3 , h _4, h _5 , R, M - t>$, where $ t $ is a slack variable. 
Calculating the elimination ideal of $ J_M $  with Maple,  eliminating the variables $ e,f, u,v, w, z $ yields a polynomial in $ a,b,c,d $ and $ t $. Solving  for $t$ yields
%\[
%    t = - \frac{ abcd(d+a+c+b)(d+a-c-b)(-d+a+c-b)(-d+a-c+b)}{4(ad-bc)^2}
%\]
\[
     t = - \frac{ abcd\, \Gamma }{4(ad-bc)^2}
\]
where $ \Gamma =(d+a+c+b)(d+a-c-b)(-d+a+c-b)(-d+a-c+b) $. 
Repeating this calculation for the ideal $< h _1 , h _2 , h _3 , h _4 , h _5 , R, \Delta _{ ABD}-s>$ yields 
%\[
%   s= \pm\frac{  \sqrt{-(d+a+c+b)(d+a-c-b)(-d+a+c-b)(-d+a-c+b)}da}{(2(ad-bc))}
%\]
\[
   s= \pm\frac{  \sqrt{- \Gamma }\,da}{(2(ad-bc))},
\]
which means that, if $ad-bc \neq 0 $,  we must have $ - \Gamma \geq 0 $, %$ -(d+a+c+b)(d+a-c-b)(-d+a+c-b)(-d+a-c+b) \geq 0 $ 
otherwise $ \Delta _{ ABD } =s $ is not  a real number. 
This shows that, provided that $ ad -  b c \neq 0 $, we have that $ M  \geq 0 $. If all the $ \Delta $'s are non-zero,  then $ M >0 $ and there are the following possible cases: $ ABCD$, $ABC$ and $ CAD $  if $ \Delta _{ ABC}>0 $  
and $ ADCB $, $ACB$ and $CDA$ if $ \Delta _{ ABC } <0 $. The quadrilaterals $ ABCD $ and $ ADCB $ are convex tilted kites (see Figure \ref{fig:tilted}), while  $ACB$  $CDA$,  $ ABC $ and $ CAD $ are concave tilted kites (see Figure \ref{fig:concave-tilted}).
Suppose $ ad - bc = 0 $.
A computation with Groebner bases shows that 
\[ b^2c^2(a-c)^2(a+c)^2(a-b)^2(a+b)^2\in \sqrt{J},\]
with $ J =< h _1 , \ldots , h _5 , R _T , a d - b c> $.  Therefore, $ c = a $ or $ a = b $. If $ c = a $ then we also have that $ d = b $. If $ a = b $, instead we have that $ c = d$.  In both  cases $ R _T = -f^2(2a^2+2b^2-e^2-f^2)$, and thus we must have $ 2a^2+2b^2-e^2-f^2=0$, which is the parallelogram law. Since the parallelogram law holds, the quadrilateral is, in both cases,  a parallelogram. Note that, in the first case we have shown that opposite sides are equal, while in the second case adjacent sides are equal. Therefore, in the second case the parallelogram is actually a rhombus.  

Now, we consider the case where one or more of the $ \Delta $'s is zero.  First of all we have that 
\[
    \Delta _{ BCD } = \pm\frac{  \sqrt{- \Gamma }\,cb}{(2(ad-bc))},\quad 
        \Delta _{ ABC } = \pm\frac{  \sqrt{- \Gamma }\,(c+d)(c-d)ba}{2(ac-bd))(ad-bc)}\]
and 
\[
        \Delta _{ ACD } = \pm\frac{  \sqrt{- \Gamma }\,(a+b)(a-b)dc}{2(ac-bd))(ad-bc)}.
\]

Therefore, $ \Delta _{ BCD }  =0$ or $ \Delta _{ ABD } = 0 $ if and only if $ \Gamma = 0  $. Consequently, if  $ \Delta _{ BCD }  =0$ or $ \Delta _{ ABD } = 0 $, then all the $ \Delta $'s must be zero, and the four points lie on a straight line. 

If  $ c = d $ then $ \Delta _{ ABC } = 0$, so that the points $ A $, $ B $ and $ C $ lie on the same line. From the equations $  g _3 = g _4 = 0 $ we find that $ e (e - 2 w) = 0 $, that is, $ e = 2w $. 
This configuration can be described as an isosceles triangle $ ABC $, with the point $ B $ on the segment $   AC $.
If  $ a = b $ then $ \Delta _{ ACD } = 0$, so that the points $ A $, $ C $ and $ D $ lie on the same line. From the equations $  g _3 = g _4 = 0 $ we find that $ e (e - 2 u) = 0 $, that is, $ e = 2u $. 
This configuration can be described as an isosceles triangle $ ACD $, with the point $ D $ on the segment $  AC$. %{\bf Add figures??}

%A computation with Groebner bases shows that 
%\[a^2b^3(a^2-b^2)(v-z),~ c^2d^3(c^2-d^2)(v-z) \in \sqrt{J},\]
%with $ J =< h _1 , \ldots , h _5 , Q _T , a d - b c> $. 
\end{proof}

\begin{remark} 
Notice that, by the previous theorems, for the tilted kites in Figures  \ref{fig:tilted},   and  \ref{fig:concave-tilted}  we must  have that $ R _T = 0 $ and $ K _T = 0 $.  
For the   quadrilateral in Figure \ref{fig:equal-angles}, however, the  angle $\angle DAB$ between the  side  $ d $ and the  diagonal  $ AB$ is equal to the angle between the opposite side and the other diagonal. Hence, in this case, by elementary geometry, we have that   the vertices $ ABCD $  define  a convex cyclic quadrilateral with diagonals $ a $ and $ c $.
By Ptolemy's theorem we then have that $ Q _T = 0 $, and by the cosine law we have that $ K _T = 0 $.  
In this case we have $ K _T = Q _T = 0 $, instead. %Therefore we also have that $ Q _T= 0 $. 
%Using $ R _T = 0 $ we find that 
%\[
%     (ab-cd)^2 - f ^2 (a ^2 + b ^2 + c ^2 + d ^2 - e ^2 - f ^2)  
%\]
%Notice that $ R _T = 0 $  can be written as 
%\[
%     (a ^2 + b ^2 + c ^2 + d ^2 - e ^2 - f ^2)  
%   = \frac{ (ab - cd) ^2 } { f ^2 }    
%\]
%and so, by the Generalized Quadrilateral Theorem \ref{thm:generalized-quadrilateral} we find that  
%\[
%    g = \frac{ 1 } { 2 } \sqrt{  \frac{ (ab - cd) ^2 } { f ^2 } } 
%\]
%is the distance between the midpoints of $ e $ and $ f $. Using $ K _T = 0 $ to eliminate $f$  we find the same formula as before
%\[
%    g = \frac{ 1 } { 2 }  \sqrt{\frac{ (ab - cd) (bc - ad )  } {(ac-bd)} }, 
%\]
%but here the geometric interpretation is different, since this is the distance between the midpoints of the diagonals of the crossed quadrilateral. If, instead we look at the four points as defining a convex quadrilateral, then the formula gives the midpoint of the opposite sides of a cyclic quadrilateral. 
\end{remark}

We conclude this section with a theorem that shows that the quadrilateral determined by the condition $ Q _T P _T = 0 $, and the quadrilateral defined by the condition $ \tilde R _T = 0 $ (that is the condition $ R _T = 0 $ but with different distances) are related to each other by a reflection. 
To introduce this theorem recall that $ A $, $ B  $, $ C  $, and $ D   $ are  four points on the plane, and, as usual,   $ a = |AB| $,  $ b = |BC| $, $ c = |CD|$, $ d = |DA| $, $ e = |AC| $, $ f = |BD| $, are the Euclidean distances between pair of points.
Let $ \tilde C $ be the reflection of the point $ C $ over the line through the points $B$ and $D$. 
Let $\tilde b =  |B\tilde C| $, $ c = \tilde |\tilde CD|$, and  $ \tilde e = |A\tilde C| $, be the distance between $ \tilde  C $ and   the points $ B, D $ and $ A $, respectively. Clearly, $ \tilde b = b $ and $ \tilde c = c $. Let $ K_T, P _T , Q _T  $ and $ R _T $ be the  polynomials given in equations (\ref{eqn:KT}), \ref{eqn:PT}),(\ref{eqn:QT}) and (\ref{eqn:RT}), respectively. Let $ \tilde K _T , \tilde P _T, \tilde Q _T $ and $ \tilde R _T $  be the analogue of   $ K_T, P _T , Q _T  $ and $ R _T $ for the points $ A, B , \tilde C $ and $ D $. It is then easy to see that $ K _T = \tilde K _T $, while $  \tilde P _T, \tilde Q _T $ and $ \tilde R _T $ are obtained from $  P _T , Q _T  $ and $ R _T $  by replacing $ e $ with $ \tilde e $.
Moreover we denote by $ CM $ the Cayley-Menger determinants for the points   $ A $, $ B  $, $ C  $, and $ D   $  and $ \widetilde{CM} $ the Cayley-Menger determinants for the points   $ A $, $ B  $, $\tilde C  $, and $ D   $.

Using these notations we prove the following theorem 
\begin{theorem}
      Let $ A, B, C  $ and $ D $ be four points that lie on the plane, and let $ \tilde C $ be the reflection of the point $ C $  over the line through the points $ B $ and $ D $. Then  $ \tilde R _T = 0 $, if and only if  $ P _T Q _T = 0 $. 
\end{theorem}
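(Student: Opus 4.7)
The plan is to establish the stronger algebraic identity $P_T Q_T = \tilde R_T$, valid for any four coplanar points $A, B, C, D$ with $\tilde C$ the reflection of $C$ across line $BD$; the stated biconditional then follows immediately.

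First, I apply identity \eqref{eqn:identity-T} to both configurations. Since $A, B, C, D$ are coplanar, $CM = 0$ yields $K_T^2 = P_T Q_T R_T$. Because the line $BD$ lies in the plane of the four points, the reflection $\tilde C$ also lies in that plane, so $A, B, \tilde C, D$ are coplanar, $\widetilde{CM} = 0$, and the analogous identity gives $\tilde K_T^2 = \tilde P_T \tilde Q_T \tilde R_T$. Now $K_T$ (see \eqref{eqn:KT}) depends only on $a, b, c, d, f$ and not on $e$, so $\tilde K_T = K_T$, and therefore $P_T Q_T R_T = \tilde P_T \tilde Q_T \tilde R_T$.

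Second, a direct expansion from the definitions yields
\[ P_T Q_T - \tilde P_T \tilde Q_T = f^2(\tilde e^2 - e^2) = -(R_T - \tilde R_T), \]
so $P_T Q_T + R_T = \tilde P_T \tilde Q_T + \tilde R_T$. Combined with the product equality from the first step, $P_T Q_T$ and $R_T$ are roots of the same monic quadratic as $\tilde P_T \tilde Q_T$ and $\tilde R_T$, so the unordered pairs $\{P_T Q_T, R_T\}$ and $\{\tilde P_T \tilde Q_T, \tilde R_T\}$ coincide as multisets.

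Third, I pin down the correct matching. Generically neither $A$ nor $C$ lies on line $BD$, so $e \neq \tilde e$; then $P_T Q_T \neq \tilde P_T \tilde Q_T$ from the second step, and the multiset equality forces $P_T Q_T = \tilde R_T$ (with $R_T = \tilde P_T \tilde Q_T$). The main obstacle I anticipate is the degenerate case $e = \tilde e$ (equivalently $A$ or $C$ on line $BD$), where the multiset equality alone does not determine the pairing; I would handle this by continuity, observing that $P_T Q_T - \tilde R_T$ is a polynomial in the coordinates of $A, B, C, D$ which vanishes on the generic open dense subset of coplanar configurations, hence identically. (An equally valid alternative is to verify the degenerate case by a short direct computation with $B$ at the origin and $D$ on the $x$-axis, where one shows $P_T Q_T = R_T$ using the collinearity.) Once $P_T Q_T = \tilde R_T$ is established, $\tilde R_T = 0 \iff P_T Q_T = 0$ is immediate.
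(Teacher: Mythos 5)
Your proof is correct, and it takes a genuinely different route from the paper's. The paper argues by cases: assuming $P_T=0$ (or $Q_T=0$) it uses the Converse of Ptolemy's Theorem to eliminate the factors $\tilde P_T$ and $\tilde Q_T$ from $\tilde P_T\tilde Q_T\tilde R_T=0$, and for the converse it leans on the degenerate-case analysis of Theorem \ref{thm:RT=0} to dispose of the possibility $R_T=0$. You instead establish the stronger polynomial identity $P_TQ_T=\tilde R_T$ (equivalently $R_T=\tilde P_T\tilde Q_T$), from which the biconditional is immediate. Your key observation --- that the pairs $\{P_TQ_T,\,R_T\}$ and $\{\tilde P_T\tilde Q_T,\,\tilde R_T\}$ have the same sum (by inspecting the $e$-dependence) and the same product (both equal $K_T^2$, by applying \eqref{eqn:identity-T} to each of the two coplanar configurations), hence are the root multisets of one monic quadratic --- is clean, and the generic pinning-down via $e\neq\tilde e$ followed by a density/continuity argument for the degenerate case is sound. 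One point to phrase carefully: $P_TQ_T$ and $\tilde R_T$ separately contain the term $-2abcd$, which is not polynomial in the coordinates; but these cross-terms cancel in the difference, which becomes
\[
P_TQ_T-\tilde R_T=(a^2-d^2)(c^2-b^2)+f^2\bigl(a^2+b^2+c^2+d^2-e^2-\tilde e^2-f^2\bigr),
\]
and with $D=(0,0)$, $B=(f,0)$, $A=(u,v)$, $C=(w,z)$, $\tilde C=(w,-z)$ the two summands equal $\mp f^2(4uw-2uf-2wf+f^2)$, so the identity even admits a one-line coordinate verification and your density argument applies verbatim. What your approach buys is a sharper conclusion (an identity rather than an equivalence of vanishing loci) and independence from the Converse of Ptolemy's Theorem and Theorem \ref{thm:RT=0}; what the paper's case analysis buys is the extra geometric information about which factor of $P_TQ_TR_T$ vanishes and why.
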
 
\begin{proof} 
This theorem can be proved by an application of Theorem \ref{prop:RadicalMembership}. Here we give a proof that does not rely on Groebner bases computations.

Suppose $ P _T = 0 $. %Solving this for $ e $ yields $ e = - \frac{ac - bd } { f } $ and substituting this into $ CM $ gives 
Substituting $ P _T = 0 $ into equation \eqref{eqn:identity-T} yields 
\[
    CM = -2 \frac{ K _T ^2 } { f ^2 } = 0,
\]
and thus $ K _T = 0 $. 
Equation  \eqref{eqn:identity-T} for the points $ A, B , \tilde C $ and $ D $ is   
\begin{equation} \label{eqn:widetildeCM}
    \widetilde{CM} = 2 \frac{ \tilde P _T \tilde Q _T \tilde R _T - K _T ^2 } { f ^2 } .
\end{equation} 
Since 
$ K _T = 0 $ we find that 
\[\widetilde{CM} = 2 \frac{ \tilde P _T \tilde Q _T \tilde R _T } { f ^2 }. \]
Therefore, we must have $  \tilde P _T \tilde Q _T \tilde R _T=0 $, since $ \widetilde{CM } = 0 $ for a planar configuration. 
If $ \tilde P_T  = 0 $ we must have $e = \tilde e$, which only happens when either 
$ A $ or  $ C $ lie on the  line through $ B $ and $ D $. However, since we are assuming that  $ P _T = 0 $  the  points  $ A, B, C $ and $ D $ are either cocircular or collinear. Since three collinear points cannot be cocircular  then all the points must be collinear. In this case $ \tilde Q _T= \tilde P _T= \tilde R _T = 0 $. 
If, on the other hand, $ \tilde Q _T = 0 $ then $ P _T - \tilde Q _T  = (e + \tilde e) f = 0 $. But this is impossible since  we take $ e, f>0 $.  

The analysis of the case $ Q _T = 0 $, is analogous to the case $ P _T = 0 $, and hence it will be omitted. 

Now, suppose $\tilde R _T = 0 $. % Solving this equation for $ \tilde e ^2 $ yields 
%\[
%    \tilde e ^2 =  \frac{- f ^4 + (a ^2 + b ^2 + c ^2 + d ^2) f ^2 - (ab - cd) ^2    } { f ^2 } 
%\]
%and substituting this expression into  $ \widetilde{CM} $ we find 
Substituting $ \tilde R_T = 0 $ into \eqref{eqn:widetildeCM} yields 
\[
      \widetilde{CM} = -2 \frac{ K _T ^2 } { f ^2 } = 0,
\]
and hence $ K _T = 0 $.
Since, by equation \eqref{eqn:identity-T},  $ CM = 2 \frac{ P _T Q _T R _T - K _T ^2 } { f ^2 } $ it follows that 
\[CM = 2 \frac{ P _T Q _T R _T } { f ^2 }. \]
Since the configuration is planar, $ CM = 0 $ and thus $ P _T Q _T R _T = 0 $. If $ R _T  \neq 0 $, then we are done. It remains to see what happens when  $ R _T = 0 $. Since $ \tilde R _T =0 $, we have that $ R _T = 0 $ if and only if $ e = \tilde e $. This last condition   holds if and only if either $ A $ or $ C $ lie on the line through the points $ B $ and $ D $. Note that, according to Theorem \ref{thm:RT=0}, if $ R _T = 0 $ and $ \Delta _{ ABD } = 0 $ or $ \Delta _{  BCD } = 0 $, then all the four points must lie on a line.  Therefore, in this case,  by the  Ptolemy's theorem,  the points $ A, B , \tilde C $ and $ D $ also  satisfy the following equalities: $ P _T = Q _T = 0 $. This concludes the proof. 
%In the first case... 
%In the second case $ C \equiv \tilde C $, and so $ P _T = \tilde P _T $, $ Q _T =\tilde Q _T $,  $ R _T = \tilde R _T $. 

\end{proof} 
\section{Finding Additional Polynomial conditions}
%--------------------------------------
Because these conditions are intrinsically interesting, and because of the potential applications to Celestial Mechanics and other fields, we think it is advantageous to find additional polynomial conditions. 
Two examples are provided here to illustrate how you can find new conditions. 

First we consider a very simple minded approach. 
If we apply the cosine law to find an expression for $ f ^2 $ instead of $  e ^2 $ then we can obtain the following polynomial
\begin{align*} 
      K_1 =(ad + bc)f ^2 - (a ^2 + d ^2) bc - (b ^2 + c ^2) ad.
    %-(a^2*b*c + a*b^2*d + a*c^2*d + b*c*d^2 - b*c*f^2 - a*d*f^2)
\end{align*} 
We can also consider 
\[
      R_1  =(ab+cd)^2-f^2(a^2+b^2+c^2+d^2-e^2-f^2).
\]
These polynomials satisfy the following conditions
\begin{align*} 
    2(K_1^2-PQR_1)+f^2CM& =0\\
    fS +K_1+(ab+cd)P& =0.
\end{align*} 
These polynomials relationships are very similar to the ones found by Pech \cite{10.1007/978-3-642-21898-9_34}, and therefore it is not worthwhile to study them. 
However, combining  these equations with  \eqref{eqn:identity2} and \eqref{eqn:identity3} we can  obtain the following symmetrized relations
\begin{align*} 
 (f-e)S+K+K_1+(ab+cd+bc+ad)P& =0\\
 2[K^2+K_1^2-PQ(R+R_1)]+(e^2+f^2)CM& =0
\end{align*}
which may quench some scholar's thirst for symmetry. 

Another  less trivial approach consists in using  Groebner bases type computations to find new polynomials and new relations. This approach seems more promising. 
For instance, by using  Maple to compute a Groebner basis for the ideal generated by  $ h _1 , \ldots , h _5 $ and $ P $ with respect to a graded reverse lexicographic order we obtain several polynomials, one of which is 
\[
    (a^2ce-ac^2f+ad^2f-ae^2f-cd^2e+cef^2)^2
\]
We can then define 
\[K _G= (a^2ce-ac^2f+ad^2f-ae^2f-cd^2e+cef^2)=  (af-ce)d^2+ce(a^2+f^2)-(c^2+e^2)af.\]
Then
\[
    \frac{ CM d ^2 } { 2 } = PQ _G R _G - K _G ^2 
\]
where  $ Q _G = -ac+bd+ef $ and 
\[
    R _G = (a^2-b^2+c^2-d ^2 +e^2+f^2)d^2- (ae - cf) ^2 % -a^2e^2+2cefa-c^2f^2
\]
These polynomials are very similar to the ones we studied in section \ref{sec:equalangles} except here  $ b $ and $ d $ are the diagonals. As  a consequence we will not study them because they do not lead to  anything substantially new. 
%This seems another version of "tilted kite", where $ b $ and $ d $ are the diagonals, or a cyclic quadrilateral, where $d$ is opposite to the equal angles formed with the diagonals.

\section*{Acknowledgements}
The author wish to thank Martin Josefsson  for his helpful comments and suggestions. 
%---------------------------------------------------

% Fakesection
%-------------------------------------------
%\bibliographystyle{amsplain}
\bibliographystyle{plain}

%\AtEveryBibitem{\clearfield{mrnumber}} 
%\bibliographystyle{unsrt}   % this means that the order of references is determined by the order in which the citations appear in the text.
%\nocite{*} 		% The command \nocite{*} causes all items in the database to99229944 be included in the references, regardless of whether or not they are cited in the paper.
\bibliography{references}% list here all the bibliographies that  you need.
%------------------------------------------

\end{document}